\newif\iflabel
\newcommand{\Label}[1]{\iflabel\ifmmode\makebox[0pt][l]{[#1]}
                       \else\marginpar{[#1]}
                       \fi\fi\label{#1} }
\newcommand{\U}{{\mathcal U}}
\newcommand{\UK}{U\!K}
\newcommand{\bs}{\backslash}
\newcommand{\ra}{\rangle}
\newcommand{\la}{\langle}
\newcommand{\X}{{\mathcal X}}
\newcommand{\Q}{{\mathcal Q}}
\newcommand{\R}{{\mathcal R}}
\newcommand{\Z}{{\mathbb Z}}
\newcommand{\N}{{\mathbb N}}
\newcommand{\rk}{\mathrm{rk}}
\newcommand{\End}{{\mathrm{End}}}
\newcommand{\Stab}{{\mathrm{Stab}}}
\newcommand{\mc}{\mathcal}
\newcommand{\Gap}{{\scshape Gap}}
\newcommand{\GAP}{\Gap}
\newcommand{\Grig}{{\mathfrak G}}
\newcommand{\FG}{\Gamma}
\newcommand{\Sym}{{\mathrm{Sym}}}
\newcommand{\Magma}{{\scshape Magma}}
\newcommand{\Rom}[1]{\MakeUppercase{\romannumeral #1}}
\newenvironment{proof}{\par\vskip-\lastskip\vskip\topsep
\noindent{\it Proof.}\vadjust{\nobreak}\quad
\begingroup\divide\topsep3\divide\itemsep3
\divide\partopsep3\divide\parskip3
\divide\parsep3}
{\ifvmode\penalty10000\hbox to\hsize{\hfil$\Box$}
%                              \vrule height1exdepth0ptwidth1ex}
\else\parfillskip0pt\widowpenalty10000\hfil$\Box$
%                              \vrule height1exdepth0ptwidth1ex
%                      \fi\par\vskip\topsep\endgroup}
\fi\par\vskip 1.5ex\endgroup}
\newtheorem{theorem}{Theorem}[section]
\newtheorem{proposition}[theorem]{Proposition}
\newtheorem{definition}[theorem]{Definition}
\title{{Investigating self-similar groups using their finite $L$-presentation}}
\author{Ren\'e Hartung}
\date{April 2012}
\begin{document}
\maketitle
\begin{abstract}
   Self-similar groups provide a rich source of groups with interesting
   properties; e.g., infinite torsion groups (Burnside groups) and
   groups with an intermediate word growth. Various self-similar groups
   can be described by a recursive (possibly infinite) presentation,
   a so-called finite $L$-presentation. Finite $L$-presentations allow
   numerous algorithms for finitely presented groups to be generalized
   to this special class of recursive presentations. We give an overview
   of the algorithms for finitely $L$-presented groups. As applications,
   we demonstrate how their implementation in a computer algebra system
   allows us to study explicit examples of self-similar groups including
   the Fabrykowski-Gupta groups. Our experiments yield detailed insight
   into the structure of these groups.\bigskip

   \noindent{\it Keywords.} Recursive presentations; self-similar groups;
   Grigorchuk group; Fabrykowski-Gupta groups; coset enumeration; finite
   index subgroups; Reide\-meister-Schreier theorem; nilpotent quotients;
   solvable quotients.
\end{abstract}

%%%%%%%%%%%%%%%%%%%%%%%%%%%%%%%%%%%%%%%%%%%%%%%%%%%%%%%%%%%%%%%%%%%%%%%%%%%%
\section{Introduction}
The general Burnside problem is among the most influential problems in
combinatorial group theory. It asks whether a finitely generated group is
finite if every element has finite order. The general Burnside problem
was answered negatively by Golod~\cite{Gol64}. The first explicit
counter-examples were constructed in~\cite{Al72,Gri80,GS83}. Among
these counter-examples is the Grigorchuk group $\Grig$ which
is a finitely generated self-similar group. The group $\Grig$
is not finitely presented~\cite{Gri99} but it admits a recursive
presentation which could be described in finite terms using the
action of a finitely generated monoid of substitutions acting on
finitely many relations~\cite{Lys85}. These recursive presentations
are nowadays known as \emph{finite $L$-presentations}~\cite{Gri99} (or
\emph{endomorphic presentations}~\cite{Bar03}) in honor of Lys{\"e}nok's
work in~\cite{Lys85} for the Grigorchuk group; see~\cite{Bar03} or
Section~\ref{sec:SelfSim} for a definition.\smallskip

Finite $L$-presentations allow computer algorithms to be employed
in the investigation of the groups they define. A first algorithm
for finitely $L$-presented groups is the nilpotent quotient
algorithm~\cite{Har08,BEH08}. Recently, further algorithms for finitely
$L$-presented groups were developed~\cite{Har10,Har11,Har11b}. For
instance, in~\cite{Har11}, a coset enumeration process for finitely
$L$-presented groups was described. This is an algorithm which, given a
finite generating set of a subgroup of a finitely $L$-presented group,
computes the index of the subgroup in the finitely $L$-presented group
provided that this index is finite. Usually index computations in
self-similar groups have involved lots of tedious calculations (e.g.,
finding an appropriate quotient of the self-similar group; computing
the index of the subgroup in this quotient; followed by a proof that
the obtained index is correct; see, for instance,~\cite[Section~4]{BG02}
or~\cite[Chapter~\Rom{8}]{Har00}). The coset enumerator in~\cite{Har11}
makes this process completely automatic and thus it shows the
significance of finite $L$-presentations in the investigation of
self-similar groups. Moreover, coset enumeration allows one to
compute the number of low-index subgroups of finitely $L$-presented
groups~\cite{Har11}.\smallskip

We demonstrate the application of the algorithms for finitely
$L$-presented groups in the investigation of a class of self-similar
groups $\FG_p$ for $3 \leq p \leq 11$. The group $\FG_3$ was introduced
in~\cite{FG85}.  It is a self-similar group with an intermediate word
growth~\cite{FG85,FG91,BP09}. The groups $\FG_p$, with $p>3$, were
introduced in~\cite{Gri00}. They are known as \emph{Fabrykowski-Gupta
groups}. Their abelianization $\FG_p / \FG_p' \cong \Z_p \times \Z_p$
was computed in~\cite{Gri00}. Moreover, for $p\geq 5$, the groups $\FG_p$
are just-infinite, regular branch groups~\cite{Gri00}. The congruence
subgroups of $\FG_p$, for primes $p>3$, were studied in~\cite{Su07}; see
also~\cite{FAZR11}.  The lower central series sections $\gamma_c\FG_3 /
\gamma_{c+1}\FG_3$ have been computed entirely in~\cite{Bar05} while,
for $p>3$, parts of the lower central series sections $\gamma_c\FG_p /
\gamma_{c+1} \FG_p$ have been computed in~\cite{BEH08}. So far, little
more is known on the groups $\FG_p$.\smallskip

For $p \geq 3$, the Fabrykowski-Gupta group $\FG_p$ admits a finite
$L$-presentation~\cite{BEH08}. We demonstrate how the implementations
of the algorithms for finitely $L$-pre\-sen\-ted groups allow us to
investigate the groups $\FG_p$ for $3\leq p\leq 11$ in detail. For
instance, we demonstrate the application of our algorithm
\begin{itemize}\addtolength{\itemsep}{-1ex}
\item to compute the isomorphism type of the lower central series sections 
      $\gamma_c\FG_p / \gamma_{c+1} \FG_p$ using improved (parallel) methods
      from~\cite{BEH08,Har08}.
\item to compute the isomorphism type of the Dwyer quotients $M_c(\FG_p)$
      of their Schur multiplier using the methods from~\cite{Har10}.
\item to determine the number of low-index subgroups of the groups $\FG_p$ 
      using the methods from~\cite{Har11}.
\item to compute the isomorphism type of the sections $\FG_p^{(c)} /
      \FG_p^{(c+1)}$ of the derived series combining the methods
      from~\cite{Har11b} and~\cite{BEH08,Har08}.
\end{itemize}
We briefly sketch the algorithms available for finitely $L$-presented groups.
Moreover, we compare our experimental results
for the Fabrykowski-Gupta groups $\FG_p$ with those results for
the Grigorchuk group $\Grig$. The group $\Grig$ has been investigated
for decades now. Even though a lot is known about its structure, various questions still remain open~\cite{Gr05}. For further details
on the Grigorchuk group $\Grig$, we refer to~\cite[Chapter~\Rom{8}]{Har00}.

%%%%%%%%%%%%%%%%%%%%%%%%%%%%%%%%%%%%%%%%%%%%%%%%%%%%%%%%%%%%%%%%%%%%%%%%%%%%
\section{Self-Similar Groups}\Label{sec:SelfSim}
A self-similar group can be defined by its recursive action on a regular
rooted tree: Consider the $d$-regular rooted infinite tree ${\mc T}_d$ as a free
monoid over the alphabet $\X = \{0,\ldots,d-1\}$. Then a self-similar
group can be defined as follows:
\begin{definition}
  A group $G$ acting faithfully on the free monoid $\X^*$ is 
  \emph{self-similar} if for each $g \in G$ and $x \in \X$ there exist
  $h\in G$ and $y \in \X$ so that
  \begin{equation}
    (xw)^g = y\,w^h \quad\textrm{for each}\quad w \in \X^*.\Label{eqn:SelfSimAct}
  \end{equation}
\end{definition}
It suffices to specify the self-similar action in
Eq.~(\ref{eqn:SelfSimAct}) on a generating set of a group. For
instance, the Grigorchuk group $\Grig = \la a,b,c,d\ra$ can be defined
as a subgroup of the automorphism group of the rooted binary tree ${\mc
T}_2 = \{0,1\}^*$ by its self-similar action:
\[
  \begin{array}{rcl@{\hspace{3cm}}rcl}
  (0\,w)^a&=&1\,w     &(1\,w)^a&=&0\,w\\
  (0\,w)^b&=&0\,w^a   &(1\,w)^b&=&1\,w^c\\
  (0\,w)^c&=&0\,w^a   &(1\,w)^c&=&1\,w^d\\
  (0\,w)^d&=&0\,w     &(1\,w)^d&=&1\,w^b\,.
  \end{array}
\]
The Fabrykowski-Gupta group $\FG_3$ is another example of a self-similar
group. It was introduced in~\cite{FG85} as a group with
an intermediate word growth~\cite{FG91,BP09}. The group $\FG_3$ was
generalized in~\cite{Gri00} to a class of self-similar groups $\FG_d$
acting on the $d$-regular rooted tree:
\begin{definition}
  For $d \geq 3$, the \emph{Fabrykowski-Gupta group} $\FG_d = \la a,r \ra$
  is a self-similar group acting faithfully on the $d$-regular rooted tree
  ${\mc T}_d = \{0,\ldots,d-1\}^*$ by 
  \[
    \begin{array}{rcll}
     (x\,w)^a&=&x+1\pmod d\,w,&\textrm{for }0\leq x\leq d-1\\[0.75ex]
     (0\,w)^r&=&0\,w^a,          &                          \\
     (x\,w)^r&=&x\,w,            &\textrm{for }1\leq x< d-1\\
     (d-1\,w)^r&=&d-1\,w^r.      &                          
    \end{array}
  \]
\end{definition}
The groups $\Grig$ and $\FG_d$ admit a finite $L$-presentation;
that is, a \emph{finite $L$-presentation} is a group
presentation of the form
\begin{equation}
   \Big\la \X~\Big|~\Q \cup \bigcup_{\sigma \in \Phi^*} \R^\sigma \Big\ra,\Label{eqn:LPresApp}
\end{equation}
where $\X$ is a finite alphabet, $\Q$ and $\R$ are finite subsets of
the free group $F$ over $\X$, and $\Phi^*$ denotes the monoid of
endomorphisms which is generated by the finite set $\Phi\subseteq \End(F)$. The
group defined by the finite $L$-presentation in Eq.~(\ref{eqn:LPresApp})
is denoted by \mbox{$\la\X\mid\Q\mid\Phi\mid\R\ra$}. If $\Q = \emptyset$ holds, 
the $L$-presentation in Eq.~(\ref{eqn:LPresApp}) is \emph{ascending}. In
this case, every endomorphism $\sigma \in \Phi^*$ induces an endomorphism
of the group $G$.\smallskip

The Grigorchuk group $\Grig$ is an example of a self-similar group
which is finitely $L$-presented~\cite{Lys85}: the group $\Grig$ satisfies
\[
  \Grig \cong \Big\la \{a,b,c,d\}\:\Big|\:\{a^2,b^2,c^2,d^2,bcd\} \cup \bigcup_{i\geq 0}\,
  \{ (ad)^4, (adacac)^4 \}^{\sigma^i}\Big\ra,
\]
where $\sigma$ is the endomorphism of the free group $F$ over
$\{a,b,c,d\}$ which is induced by the map $a\mapsto aca$, $b\mapsto
d$, $c\mapsto b$, and $d\mapsto c$.  A general method for computing a
finite $L$-presentation for a class of self-similar groups was developed
in~\cite{Bar03} in order to prove
\def\0{Bartholdi~\cite{Bar03}}
\begin{theorem}[\0]\Label{thm:FinLPres}
  Each finitely generated, contracting, semi-fractal regular branch group
  is finitely $L$-presented; however, it is not finitely presented.
\end{theorem}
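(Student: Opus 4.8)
The plan is to prove the two assertions in turn: first construct the finite $L$-presentation, then exploit the branch structure to exclude finite presentability. Write $G=\la\X\ra$ for the group, $F$ for the free group over $\X$, and $\pi\colon F\to G$ for the canonical epimorphism with $R_\infty=\ker\pi$ the full, infinite relation set. The self-similar action supplies the wreath recursion $\psi\colon G\hookrightarrow G\wr\Sym(\X)$; let $H=\Stab_G(1)$ be the (finite-index) stabiliser of the first level, so $\psi$ restricts to a homomorphism $H\to G^{\,d}$ recording the $d$ sections at level one. The contracting hypothesis provides a finite nucleus $N\subseteq G$, and the regular branch hypothesis a finite-index $K\trianglelefteq G$ with $K\times\cdots\times K\le\psi(K\cap H)$.

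First I would obtain a recursive description of $R_\infty$. Lifting $\psi|_H$ along a Schreier transversal gives a homomorphism $\Psi\colon\Stab_F(1)\to F^{\,d}$ (writing $\Stab_F(1)=\pi^{-1}(H)$) with $\pi(\Psi(w)_i)=\psi(\pi w)_i$ in each coordinate $i$. Since an element of $H$ all of whose sections are trivial fixes the entire tree, for $w\in\Stab_F(1)$ one has $\pi(w)=1$ precisely when $\Psi(w)\in(R_\infty)^{d}$; as $R_\infty\subseteq\Stab_F(1)$, this realises $R_\infty$ as the solution of the self-referential equation $R_\infty=\Psi^{-1}\big((R_\infty)^{d}\big)$. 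The semi-fractal hypothesis is what makes this equation effective: it lets me choose, for each generator, a word of $F$ whose image lies in $H$ with a prescribed section, and thereby define a finite set $\Phi\subseteq\End(F)$ of substitutions that inverts $\Psi$ coordinatewise modulo relations, so that $\Psi(\sigma(w))$ agrees with $(w,1,\dots,1)$ up to $R_\infty$ for $\sigma\in\Phi$. Iterating $\sigma$ then transports a relation to arbitrary depth in the tree.

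The core of Part 1 is to pin down finite sets $\Q,\R\subseteq F$ so that $R_\infty$ equals the normal closure in $F$ of $\Q\cup\bigcup_{\sigma\in\Phi^{*}}\R^{\sigma}$. Here $\Q$ should be a finite set of relations describing the first-level behaviour---essentially a presentation of a suitable finite quotient together with the words forcing the substitutions and $\Psi$ to be well defined---while $\R$ is a finite family of ``seed'' relations supported on the branching subgroup $K$. That the listed normal closure lies in $R_\infty$ is the easy direction, since every listed relator is directly checked to be trivial in $G$ through $\psi$. The reverse inclusion is the crux and the main obstacle: given an arbitrary $r\in R_\infty$ I would induct on its length in the contracting metric, decomposing $r$ via $\Psi$ into sections that are strictly shorter (by contraction) and eventually lie in the nucleus $N$; the branch condition $K\times\cdots\times K\le\psi(K\cap H)$ lets me peel the relation off one coordinate at a time using $\sigma$-images, while the nucleus relations and the first-level corrections are absorbed by the finitely many seeds $\R$ and the base relators $\Q$. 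Contraction bounds the recursion depth, so finitely many seeds suffice---this is precisely where the contracting and branch hypotheses must cooperate.

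For the final assertion I would argue by contradiction. If $G$ were finitely presented it would be of type $\mathrm{FP}_2$, hence its Schur multiplier $H_2(G,\Z)\cong (R_\infty\cap[F,F])/[F,R_\infty]$ (Hopf's formula) would be finitely generated as an abelian group. I would contradict this by using the branch structure to exhibit infinitely many independent multiplier classes: the relators $\R^{\sigma^{n}}$ pushed to level $n$ yield elements of $R_\infty\cap[F,F]$ that are not boundaries modulo $[F,R_\infty]$, and the branch decomposition $\psi$ detects the level, so classes arising from distinct $n$ are independent. Infinite generation of $H_2(G,\Z)$ then contradicts finite presentability. The obstacle in this part is exactly the independence and non-triviality of these classes, i.e. proving that the normal closures of finite sub-families of $\bigcup_{\sigma\in\Phi^{*}}\R^{\sigma}$ form a strictly ascending chain; this is where one must use that the contracting regular branch group is genuinely infinite and that no finite truncation of the $\sigma$-orbit already generates $R_\infty$.
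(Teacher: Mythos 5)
First, note the context: the paper itself contains no proof of this statement --- it is Bartholdi's theorem, quoted from~\cite{Bar03}, and the paper only invokes its constructive proof as a black box to obtain the $L$-presentation of $\FG_d$ in Theorem~\ref{thm:FGLp}. Measured against the argument of~\cite{Bar03}, your Part~1 is the right outline: wreath recursion lifted along a transversal to a map $\Psi$ on $\Stab_F(1)$, substitutions supplied by semi-fractality, seed relators supported on the branching subgroup $K$, and contraction to bound the recursion depth. But the step you yourself flag as the crux --- that the normal closure of $\Q\cup\bigcup_{\sigma\in\Phi^*}\R^\sigma$ exhausts the full kernel $R_\infty$ --- is where essentially all of the work in~\cite{Bar03} lies; ``peeling the relation off one coordinate at a time'' and ``absorbing corrections into the finitely many seeds'' restates that goal rather than establishing it. As an outline this half is acceptable; as a proof it is incomplete.

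The genuine gap is Part~2. You propose to exclude finite presentability by proving that $H_2(G,\Z)$ is infinitely generated, the classes of the relators $\R^{\sigma^n}$ being independent ``because the branch decomposition detects the level.'' No argument is given for this, and it is not a detail: Hopf's formula computes $(R_\infty\cap F')/[R_\infty,F]$, the section map does not descend to this quotient in any evident way, and some iterated relators (e.g.\ the $\alpha^d$-type relators in Theorem~\ref{thm:FGLp}) do not even lie in $F'$. Worse, your target is strictly stronger than the theorem. What non-finite-presentability requires (after the standard compactness step: a finitely presented $F/R_\infty$ would satisfy $R_\infty=R_\ell$ for $R_\ell=\la\Q\cup\bigcup_{\|\sigma\|\le\ell}\R^\sigma\ra^F$ and some $\ell$) is that the chain $R_0\le R_1\le\cdots$ never stabilizes. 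Independence in $H_2$ implies this, but not conversely: an infinitely presented group can have trivial multiplier (Baumslag~\cite{Bau71}, cited in Section~\ref{sec:Dwyer}). And infinite generation of $M(G)$ is simply not known for all groups covered by the theorem --- this very paper treats it as an experimentally supported conjecture for $\FG_d$ with $d$ a prime power, and as inaccessible for non-prime-power $d$ because of the maximal nilpotent quotient (Proposition~\ref{prop:MaxNilQuot}), although those groups also fall under the theorem. So if your independence claim were provable as sketched, it would settle questions this paper leaves open; that is a strong sign the difficulty has been hidden rather than solved. Bartholdi's proof avoids homology altogether: roughly, the strict ascent of the chain $(R_\ell)$ is derived from structural properties of the truncated groups $F/R_\ell$, which retain free behaviour below level $\ell$ of a kind that a branch group (all of whose proper quotients are virtually abelian, by Grigorchuk's theorem) cannot exhibit, whence $F/R_\ell\not\cong G$ for every $\ell$. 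Your Part~2 should either be replaced by an argument of that kind, or be restricted to specific groups such as $\Grig$, where the multiplier computation was in fact carried out explicitly by Grigorchuk~\cite{Gri99}.
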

The constructive proof of Theorem~\ref{thm:FinLPres} in~\cite{Bar03}
was used in~\cite{BEH08} to compute the following finite $L$-presentation
for the Fabrykowski-Gupta group $\FG_p$:
\def\0{Bartholdi et al.~\cite{BEH08}}
\begin{theorem}[\0]\Label{thm:FGLp}
  For $d\geq 3$, the group $\FG_d$ is finitely
  $L$-presented by $\la\{\alpha,\rho\} \mid \emptyset\mid \{\varphi\}\mid
  \R \ra$ where the iterated relations in $\R$ are defined as follows:
  Writing $\sigma_i = \rho^{\alpha^i}$, for $1 \leq i \leq d-1$, and reading
  indices modulo $d$, we have
  \[
    {\mathcal R}=\left\{
    \alpha^d, 
    \Big[\sigma_i^{\sigma_{i-1}^k},\sigma_j^{\sigma_{j-1}^\ell}\Big],
    \sigma_i^{-\sigma_{i-1}^{k+1}} 
    \sigma_i^{\sigma_{i-1}^k\sigma_{i-1}^{\sigma_{i-2}^\ell}}
    \right\}_{  
%   \begin{array}{c}
      {\scriptstyle 1\leq i,j\leq d},\:
      {\scriptstyle 2\leq |i-j|\leq d-2},\:
      {\scriptstyle 0\leq k,\ell \leq d-1}
%   \end{array}
    }
  \]
  The substitution $\varphi$ is induced by the map $\alpha\mapsto
  \rho^{\alpha^{-1}}$ and $\rho\mapsto \rho$.
\end{theorem}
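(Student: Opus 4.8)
The plan is to derive Theorem~\ref{thm:FGLp} from the \emph{constructive} form of Theorem~\ref{thm:FinLPres}, so the first task is to check that $\FG_d$ meets its hypotheses. The group is generated by $\{a,r\}$, hence finitely generated. Writing the self-similar action as a wreath recursion relative to the $d$-cycle $\tau=(0\,1\,\cdots\,d-1)$, one reads off from the defining action $a=(1,\dots,1)\,\tau$ and $r=(a,1,\dots,1,r)$, where the tuple records the sections at the vertices $0,\dots,d-1$. From this decomposition I would verify that $\FG_d$ is semi-fractal (the sections of $r$ recover both generators, since $r|_0=a$ and $r|_{d-1}=r$), contracting (the section map strictly shortens long elements of $\Stab_{\FG_d}(1)$, yielding a contraction coefficient $<1$), and regular branch over the subgroup $K$ normally generated by the conjugates $\sigma_i=\rho^{\alpha^i}$ of $r$. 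Only the branch property requires genuine work: it amounts to exhibiting $K^{d}\le\psi(K)$ together with $[\FG_d:K]<\infty$, and since branch-ness is quoted in the introduction only for primes $p\ge 5$, establishing it uniformly for all $d\ge 3$ is itself part of the proof.

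Granting the hypotheses, I would run the algorithm underlying Theorem~\ref{thm:FinLPres}, whose output has two ingredients: a finite set of \emph{seed} relators and the substitution $\varphi$ that produces all remaining relators by iteration. The substitution is the \emph{descent} endomorphism: because $\FG_d$ is branch, an element acting as $w$ on a distinguished subtree and trivially elsewhere again lies in $\FG_d$, and lifting this assignment to the free group $F$ over $\{\alpha,\rho\}$ gives $\varphi$. The prescription $\alpha\mapsto\rho^{\alpha^{-1}}$, $\rho\mapsto\rho$ is precisely the lift dictated by $r=(a,\dots,r)$: the fixed point $\varphi(\rho)=\rho$ reflects $r|_{d-1}=r$, while $\varphi(\alpha)=\rho^{\alpha^{-1}}$ reflects that $a$ re-enters the tree as the section $r|_0=a$. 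The relators $\R$ must then be shown to hold in $\FG_d$; this \emph{soundness} direction is a direct computation in the wreath product. Indeed $\alpha^d$ maps to the identity since $a$ is a $d$-cycle; each commutator $[\sigma_i^{\sigma_{i-1}^k},\sigma_j^{\sigma_{j-1}^\ell}]$ vanishes for $2\le|i-j|\le d-2$ because the corresponding directed automorphisms $\sigma_i=a^{-i}ra^i$ are supported on disjoint subtrees; and the third family encodes the branch identity read off from $\psi$.

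The heart of the matter, and the step I expect to be hardest, is \emph{completeness}: that $\R$ together with its iterates $\R^{\varphi^i}$, $i\ge 0$, already defines $\FG_d$, i.e.\ generates the full relation module. Here I would follow Bartholdi's scheme. Given an arbitrary relator $w$ of $\FG_d$, first use $\alpha^d$ and the commutator relations to push $w$ into $\Stab_{\FG_d}(1)$, then decompose it through $\psi$ into a $d$-tuple of strictly shorter words, and invoke the contraction coefficient to guarantee that this rewriting terminates after finitely many rounds. Each descent step must be realized on the free group by $\varphi$, modulo the relators already available, so that every relator consumed along the way lies in the $\varphi$-orbit of $\R$. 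The delicate points are the bookkeeping of the index arithmetic modulo $d$ and the check that the normal form produced by the descent is captured exactly by the third relator family $\sigma_i^{-\sigma_{i-1}^{k+1}}\sigma_i^{\sigma_{i-1}^k\sigma_{i-1}^{\sigma_{i-2}^\ell}}$ over the stated ranges of $i,k,\ell$. Matching the abstract output of the algorithm to this closed-form list is where the real effort lies; for $d>3$ the coincidence is most safely confirmed by the explicit machine computation of the finite $L$-presentation that the paper's implementation supplies.
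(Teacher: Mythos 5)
Your overall strategy---verify the hypotheses of Theorem~\ref{thm:FinLPres} for $\FG_d$ and push the group through Bartholdi's constructive proof---is exactly the route behind the statement: the paper itself gives no proof of Theorem~\ref{thm:FGLp} but quotes it from~\cite{BEH08}, where precisely this construction from~\cite{Bar03} is carried out. The problems are in your execution. Most concretely, you misidentify the branching subgroup: the subgroup normally generated by the conjugates $\sigma_i=\rho^{\alpha^i}$ is the first level stabilizer $\Stab_{\FG_d}(1)$, and $\FG_d$ is \emph{not} regular branch over it. Writing $\psi$ for the level-one section decomposition, $\psi(\Stab_{\FG_d}(1))$ is generated by the tuples $\psi(r^{a^i})$, which have section $a$ at vertex $i$, section $r$ at vertex $i-1$, and trivial sections elsewhere. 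Projecting to $(\FG_d/\FG_d')^d\cong(\Z_d\times\Z_d)^d$, these generate the set of all tuples $\bigl(c_0\bar a+c_1\bar r,\;c_1\bar a+c_2\bar r,\;\ldots,\;c_{d-1}\bar a+c_0\bar r\bigr)$ with $c_i\in\Z_d$, and $(\bar r,0,\ldots,0)$ is not of this form (it forces $c_1\equiv 1$ and $c_1\equiv 0\pmod d$). Hence $(r,1,\ldots,1)\notin\psi(\Stab_{\FG_d}(1))$, so the defining containment $K^d\leq\psi(K)$ fails for your $K$. The subgroup over which $\FG_d$ actually branches is the derived subgroup $\FG_d'$ (see~\cite{Gri00} for $p\geq 5$, \cite{BG02} for $d=3$, and \cite{BEH08} in general; the key point is $\psi([r^a,r])=([r,a],1,\ldots,1)$, after which fractality and normality give $(\FG_d')^d\leq\psi(\FG_d')$). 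This is not a cosmetic slip: Bartholdi's construction takes the pair $(\FG_d,K)$ as input, and the shape of the relators in $\R$---in particular the third family, which presents the branching structure relative to $K$---is determined by that choice; with your $K$ the hypotheses of Theorem~\ref{thm:FinLPres} are simply not satisfied.

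Second, the step you yourself call the heart of the matter---completeness, i.e.\ that the $\varphi$-iterates of $\R$ exhaust all relations of $\FG_d$---is not carried out but deferred, for $d>3$, to ``explicit machine computation.'' That cannot prove the theorem: the statement covers the infinite family of all $d\geq 3$, and a computation treats only finitely many $d$ (and even for a fixed $d$ it would merely re-derive what has to be proved, since the claimed presentation is the input one wants to certify). The argument in~\cite{BEH08} runs Bartholdi's construction symbolically, uniformly in $d$, with exactly the index bookkeeping modulo $d$ that you mention; that uniform argument is what is missing from your proposal. Your soundness checks are fine---the wreath recursion $a=(1,\ldots,1)\tau$, $r=(a,1,\ldots,1,r)$, the vanishing of the commutators for $2\leq|i-j|\leq d-2$ because the supports $\{i-1,i\}$ and $\{j-1,j\}$ of the conjugated $\sigma$'s are disjoint, and the reading of $\varphi(\alpha)=\rho^{\alpha^{-1}}$, $\varphi(\rho)=\rho$ from the sections at vertex $d-1$---but they constitute the easy half of the theorem.
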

It follows immediately from the $L$-presentation in Theorem~\ref{thm:FGLp}
that the substitution $\varphi$ induces an endomorphism of the group
$\FG_d$.  Finite $L$-presentations $\la\X\mid\Q\mid\Phi\mid\R\ra$
whose substitutions $\sigma\in\Phi$ induce endomorphisms of the
group are \emph{invariant $L$-presentations}. Each ascending
$L$-presentation is invariant. It is also easy to see that
the $L$-presentation for the Grigorchuk group $\Grig$ above is
invariant~\cite[Corollary~4]{Gri98}.\smallskip

A finite $L$-presentation allows us to define a group that is possibly
infinitely presented in computer algebra systems such as \Gap~\cite{GAP}
or \Magma~\cite{Magma}. Beside defining a self-similar group by its
finite $L$-presentation, it can also be defined by its recursive action
on a regular tree.  A finite approximation of the recursive action of a
self-similar group is often sufficient to study finite index subgroups
since various self-similar groups have the congruence property: every
finite index subgroup contains a level stabilizer (i.e., the stabilizer of
some level of the regular tree). This often yields an alternative approach
to investigate the structure of a self-similar group with the help of
computer algebra systems~\cite{FR}. However, there are self-similar
groups that do not have the congruence property~\cite{BS10}. For these
groups, their finite $L$-presentation may help to gain insight into
the structure of the group. The groups $\Grig$ and $\FG_3$ have the
congruence property~\cite{BG02}.\smallskip

In the following, we demonstrate how the finite $L$-presentation
in Theorem~\ref{thm:FGLp} allows us to obtain detailed information
on the structure of the groups $\FG_p$, for $3 \leq p \leq 11$.
For further details on self-similar groups, we refer to the monograph
by Nekrashevych~\cite{Nek05}.

%%%%%%%%%%%%%%%%%%%%%%%%%%%%%%%%%%%%%%%%%%%%%%%%%%%%%%%%%%%%%%%%%%%%%%%%%%%%
\section{A Nilpotent Quotient Algorithm}\Label{sec:NQL}
For a group $G$, the lower central series is defined recursively by
$\gamma_1 G = G$ and $\gamma_{c+1} = [\gamma_cG,G]$ for $c\in\N$. If
$G$ is finitely generated, $G / \gamma_{c+1}G$ is polycyclic
and therefore it can be described by a polycyclic presentation; i.e., a polycyclic presentation is a finite
presentation whose generators refine a subnormal series with cyclic
sections. A polycyclic presentation allows effective computations within
the group it defines~\cite[Chapter~9]{Sims94}.\smallskip

A nilpotent quotient algorithm computes a polycyclic presentation
for the factor group $G/ \gamma_{c+1}G$ together with a homomorphism
$G \to G/\gamma_{c+1} G$. Such an algorithm for finitely presented
groups was developed in~\cite{Nic96}. This nilpotent quotient
algorithm was a first algorithm that could be generalized to finite
$L$-presentations~\cite{Har08,BEH08}. The experimental results in
this section were obtained with an improved, parallel version of the
algorithm in~\cite{Har08,BEH08}. They extend the computational results
in~\cite{BEH08} significantly.\smallskip

We briefly sketch the nilpotent quotient algorithm for
finitely $L$-presented groups in the following. Let $G =
\la\X\mid\Q\mid\Phi\mid\R\ra$ be a finitely $L$-presented group. Denote
by $F$ the free group over the alphabet $\X$ and let $K$ be the
normal closure $K = \left\la \bigcup_{\sigma\in\Phi^*} \R^\sigma
\right\ra^F$. First, we assume that $\Q = \emptyset$ holds. Then
$K^\sigma \subseteq K$, for each $\sigma\in\Phi$, and $G = F/K$
hold. Therefore, each $\sigma\in\Phi$ induces an endomorphism of the
group $G$. Furthermore, we have $G / \gamma_cG \cong F / K\gamma_cF$.
The nilpotent quotient algorithm uses an induction on $c$ to compute a
polycyclic presentation for $G / \gamma_cG$.  For $c = 2$, we have
\[
  G / [G,G] \cong F/KF'
  \cong (F/F')/(KF'/F').
\]
Since $G$ is finitely generated, $F/F'$ is free abelian with finite
rank. The normal generators $\bigcup_{\sigma\in\Phi^*} \R^\sigma$
of $K$ give a (possibly infinite) generating set of $KF'/F'$. From
this generating set it is possible to compute a finite generating set
${\mc U}$ with a spinning algorithm. The finite generating set ${\mc U}$
allows us to apply the methods from~\cite{Nic96} that eventually compute
a polycyclic presentation for $F/KF'$ together with a homomorphism $F
\to F/KF'$ which induces $G \to G/G'$.\smallskip

For $c>2$, assume that the algorithm has already computed a polycyclic
presentation for $G/ \gamma_cG \cong F / K\gamma_cF$ together with a
homomorphism $F \to F / K\gamma_cF$. Consider the factor group $H_{c+1}
= F / [K\gamma_cF, F]$. Then $[K\gamma_cF,F] = [K,F]\gamma_{c+1}F$
and $H_{c+1}$ satisfies the short exact sequence
\[
  1 \to 
  K\gamma_cF / [K\gamma_cF, F] \to  H_{c+1} \to
% F / [K\gamma_cF,F] \to 
  F / K\gamma_c F \to 1;
\]
that is, $H_{c+1}$ is a central extension of a finitely generated abelian
group by $G/\gamma_cG$.  Thus $H_{c+1}$ is nilpotent and polycyclic.
A polycyclic presentation for $H_{c+1}$ together with a homomorphism
$F \to F / [K\gamma_cF,F]$ can be computed with the covering algorithm
in~\cite{Nic96}; for a proof that this algorithm generalizes to finite
$L$-presentations we refer to~\cite{Har08}. Then $K\gamma_{c+1}F /
[K\gamma_c,F]$ is subgroup of $K\gamma_cF / [K\gamma_cF, F]$ and a
(possibly infinite) generating set for $K\gamma_{c+1}F / [K\gamma_cF,F]$
can be obtained from the normal generators of $K$.  Again, a finite
generating set ${\mc U}$ for $K\gamma_{c+1}F/[K\gamma_cF,F]$ can
be computed with a spinning algorithm from the normal generators of
$K$. The finite generating set $\U$ allows us to apply the methods
in~\cite{Nic96} for computing a polycyclic presentation for $G /
\gamma_{c+1}G \cong F / K\gamma_{c+1}F$ together with a homomorphism $F
\to F/K\gamma_{c+1}F$. This finishes our description of the nilpotent
quotient algorithm in the case where $Q = \emptyset$ holds.\smallskip

If, on the other hand, $G$ is given by a finite $L$-presentation
$\la\X\mid\Q\mid\Phi\mid\R\ra$ with $\Q \neq \emptyset$, the algorithm described above applies
to the finitely $L$-presented group $H = \la\X\mid\emptyset\mid\Phi\mid\R\ra$. Write $H =
F/K$ and $G = F/L$ for normal subgroups $K \leq L$.  
The nilpotent quotient algorithm applied to $H$ yields a polycyclic
presentation for $H / \gamma_{c+1}H$ together with a homomorphism $F
\to F/K\gamma_{c+1}F$. This yields
\[
  G / \gamma_{c+1}G \cong F / L\gamma_{c+1}F \cong 
  (F/K\gamma_{c+1}F) / (L\gamma_{c+1}F/K\gamma_{c+1}F).
\]
The subgroup $L\gamma_{c+1}F/K\gamma_{c+1}F$ is finitely generated by
the images of the relations in $\Q$. Standard methods for polycyclic
groups~\cite{Sims94} then give a polycyclic presentation for the
factor group $G / \gamma_{c+1}G$ of the polycyclically presented group
$H/\gamma_{c+1}H$ and a homomorphism $F \to G / \gamma_{c+1}G$.

%%%%%%%%%%%%%%%%%%%%%%%%%%%%%%%%%%%%%%%%%%%%%%%%%%%%%%%%%%%%%%%%%%%%%%%%%%%%
\subsection{Applications of the Nilpotent Quotient Algorithm}\Label{sec:AppsNQL}
The nilpotent quotient algorithm allows us to compute within the lower
central series quotients $G / \gamma_{c+1}G$ of a finitely $L$-presented
group $G$. For instance, it allows us to determine the isomorphism
type of the lower central series sections $\gamma_cG / \gamma_{c+1}G$.
For various self-similar groups, the lower central series sections
often exhibit periodicities. For instance, the Grigorchuk group $\Grig$
satisfies
\def\0{Rozhkov~\cite{Roz96}}
\begin{theorem}[\0]
  The lower central series sections $\gamma_c\Grig/\gamma_{c+1}\Grig$ 
  are $2$-elementary abelian with the following $2$-ranks:
  \[
   \rk_2(\gamma_c\Grig/\gamma_{c+1}\Grig) = \left\{ \begin{array}{cl}
   3\textrm{ or } 2,&\textrm{ if }c=1\textrm{ or }c=2,\textrm{ respectively}\\[0.5ex]
   2,&\textrm{ if }c\in\{2\cdot 2^m+1,\ldots,3\cdot 2^m\}\\[0.5ex]
   1,&\textrm{ if }c\in\{3\cdot 2^m+1,\ldots,4\cdot 2^m\}
   \end{array}\right\}\textrm{ with }m\in\N_0.
  \]
  The group $\Grig$ has finite width $2$.
\end{theorem}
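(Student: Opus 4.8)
The plan is to combine the self-similar (branch) structure of $\Grig$ with its finite $L$-presentation, reducing the determination of the sections $\gamma_c\Grig/\gamma_{c+1}\Grig$ to a recursion whose solution is the stated period-doubling pattern. First I would show that every section is elementary abelian over $\mathbb{F}_2$. Since $\Grig$ is a torsion $2$-group, each $\gamma_c\Grig/\gamma_{c+1}\Grig$ is automatically a finite abelian $2$-group; the real point is that the exponent is exactly $2$. I would verify $x^2\in\gamma_{c+1}\Grig$ for every $x\in\gamma_c\Grig$ using the defining relators $a^2,b^2,c^2,d^2$ together with the iterated relators $\{(ad)^4,(adacac)^4\}^{\sigma^i}$; equivalently, I would pass to the associated graded Lie algebra $L=\bigoplus_{c\ge1}\gamma_c\Grig/\gamma_{c+1}\Grig$ over $\mathbb{F}_2$ and check that the squaring (Frobenius) map strictly raises degree, so that each homogeneous component is an $\mathbb{F}_2$-vector space with $\rk_2(\gamma_c\Grig/\gamma_{c+1}\Grig)=\dim_{\mathbb{F}_2}L_c$.

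The core step is a recursion on the dimensions $\dim_{\mathbb{F}_2}L_c$, driven by the embedding $\psi\colon\mathrm{St}(1)\hookrightarrow\Grig\times\Grig$ coming from the action on the binary tree. For $c\ge2$ one has $\gamma_c\Grig\subseteq\mathrm{St}(1)$, and the branch structure sandwiches $\psi(\gamma_c\Grig)$ between $\gamma_{c'}\Grig\times\gamma_{c'}\Grig$ and a slightly larger lower-central term, where $c'$ is, up to a fixed additive shift, roughly $\lceil c/2\rceil$. Because descending one tree level roughly halves the relevant degree, this yields a linear recursion relating $\dim_{\mathbb{F}_2}L_c$ to $\dim_{\mathbb{F}_2}L_{c'}$ whose orbits are exactly the dyadic blocks $\{2\cdot2^m+1,\ldots,3\cdot2^m\}$ and $\{3\cdot2^m+1,\ldots,4\cdot2^m\}$; this is the mechanism that forces the interval boundaries to sit at multiples of powers of $2$.

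Then I would fix the base of the induction by computing the first several sections explicitly: here the nilpotent quotient algorithm of Section~\ref{sec:NQL}, applied to Lys\"enok's finite $L$-presentation, returns the ranks $3,2,2,1,\ldots$ for small $c$ directly, and I would feed these values into the recursion to obtain the full pattern by induction on $m$. The width statement then follows by taking the supremum of the ranks: the recursion shows $\rk_2(\gamma_c\Grig/\gamma_{c+1}\Grig)\le2$ for every $c\ge2$, so $\Grig$ has finite width $2$.

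The hard part will be making the sandwich inclusions precise. One must identify exactly which term $\gamma_{c'}\Grig\times\gamma_{c'}\Grig$ the image $\psi(\gamma_c\Grig)$ contains and is contained in, and control the small discrepancy between $\gamma_c\Grig\cap\mathrm{St}(1)$ and $\gamma_c\Grig$ (an index-$2$ correction at the degrees where $a$ contributes). Pinning down the exact shift is what fixes the interval endpoints and hence the precise $2$-ranks; an off-by-one error here would destroy the periodicity, so this bookkeeping---rather than the abelian or the purely computational parts---is the delicate heart of the argument.
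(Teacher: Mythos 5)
First, a point of comparison: the paper does not prove this theorem at all. It is quoted as background, attributed to Rozhkov~\cite{Roz96}, and serves only as the benchmark against which the paper's nilpotent-quotient computations are checked. So your proposal cannot be measured against an in-paper argument; it has to stand on its own as a reconstruction of the proofs in the literature (Rozhkov~\cite{Roz96}, or the Lie-algebra methods of~\cite{Bar05}). Its overall architecture --- elementary abelianness of the sections, the embedding $\psi\colon\Stab_\Grig(1)\hookrightarrow\Grig\times\Grig$, a degree-halving recursion producing dyadic blocks, and explicitly computed base cases --- does match the shape of those proofs.

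The genuine gap is that the one step carrying all the content is left as a declared intention. The ``sandwich'' $\gamma_{c'}\Grig\times\gamma_{c'}\Grig\leq\psi(\gamma_c\Grig)\leq(\textrm{slightly larger term})$ with $c'$ ``up to a fixed additive shift, roughly $\lceil c/2\rceil$'' is never stated exactly, let alone proved, and it is not something one can take for granted: the lower central series of $\Grig$ does not satisfy such a clean recursion on the nose. The published arguments do not proceed this way; they introduce auxiliary normal subgroups adapted to the branch structure (such as the normal closure $K$ of $(ab)^2$, over which $\Grig$ is regular branch, with $K\times K\leq\psi(K)$), prove exact inclusions for a filtration built from these, and only then translate back to $\gamma_c\Grig$ --- and that translation is precisely the delicate degree bookkeeping. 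You yourself concede that pinning down the exact shift is ``the delicate heart'' and that an off-by-one error destroys the conclusion; leaving it at ``roughly'' means the interval endpoints $2\cdot2^m$, $3\cdot2^m$, $4\cdot2^m$ are never actually derived. The exponent-$2$ claim has the same character: it is not automatic from $\Grig$ being a torsion $2$-group (that only makes the sections finite $2$-groups), and your ``equivalently, pass to the graded Lie algebra over $\mathbb{F}_2$'' presupposes exactly what is to be proven, since the graded Lie ring is an $\mathbb{F}_2$-algebra only once each section is known to have exponent $2$. Finally, the nilpotent-quotient computations can legitimately seed the induction, but only after the recursion is established in exact form; as written, nothing in your proposal excludes the pattern breaking beyond the computed range.
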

Our implementation of the nilpotent quotient algorithm in~\cite{NQL}
allows a computer algebra system to be applied in the investigation of the
quotients $G / \gamma_cG$ for a finitely $L$-presented group $G$. For instance,
our implementation suggests that the group $\FG_d$ has a maximal
nilpotent quotient whenever $d$ is not a prime-power. Based on this
experimental observation, the following proposition was proved:
\def\0{Bartholdi et al.~\cite{BEH08}}
\begin{proposition}[\0]\Label{prop:MaxNilQuot}
% If $d$ is not a prime-power, then $\FG_d$ has a largest nilpotent
% quotient.
  If $d$ is not a prime-power, the group $\FG_d$ has a maximal nilpotent
  quotient. Its nilpotent quotients are isomorphic to the nilpotent quotients
  of the wreath product $\Z_d \wr \Z_d$.
\end{proposition}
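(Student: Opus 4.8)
The plan is to exhibit $W=\Z_d\wr\Z_d$ as an explicit quotient of $\FG_d$ and then to prove that the two groups have the same lower central quotient at every level. Write $W=\Z_d\wr\Z_d=B\rtimes\la t\ra$, with base group $B\cong\Z_d^d$ regarded as the module $\Z_d[t]/(t^d-1)$ on which the top generator $t$ acts by the cyclic shift, and let $s\in B$ generate the first coordinate, so that $\la s,t\ra=W$ because the conjugates $s^{t^i}$ generate $B$. First I would define $\phi\colon\FG_d\to W$ on generators by $a\mapsto t$ and $r\mapsto s$ and check that it is a well-defined surjection. Surjectivity is clear, and well-definedness is verified against the relators of Theorem~\ref{thm:FGLp}: the relator $\alpha^d$ maps to $t^d=1$, while every $\sigma_i=r^{a^i}$ maps into the abelian base $B$, so each commutator relator and the iterated relator collapse to a trivial word. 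Since $W$ is finite it certainly has a maximal nilpotent quotient, and the whole statement will follow once I show that $\phi$ induces an isomorphism $\FG_d/\gamma_c\FG_d\cong W/\gamma_cW$ for every $c$.

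Because $\phi$ is onto it maps $\gamma_c\FG_d$ onto $\gamma_cW$, so the induced map $\FG_d/\gamma_c\FG_d\to W/\gamma_cW$ is surjective with kernel the image of $\ker\phi$; hence these maps are all isomorphisms precisely when $\ker\phi\subseteq\gamma_c\FG_d$ for every $c$, that is, when $\ker\phi$ lies in the nilpotent residual $\bigcap_c\gamma_c\FG_d$. Granting this inclusion, the finiteness of $W$ makes the chain $W/\gamma_cW$ stationary from some $c_0$ onwards; the isomorphisms then force $\gamma_{c_0}\FG_d=\gamma_{c_0+1}\FG_d$, so $\FG_d/\gamma_{c_0}\FG_d$ is a maximal nilpotent quotient and is isomorphic to that of $W$, and since the whole towers agree the nilpotent quotients of $\FG_d$ and of $W$ coincide. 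Thus everything reduces to the single inclusion $\ker\phi\subseteq\bigcap_c\gamma_c\FG_d$.

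To attack it I would first identify $\ker\phi$. In $\FG_d$ the non-adjacent rotors already commute, and both generators have order $d$ (for $r$ this follows from $\psi(r^d)=(1,\dots,1,r^d)$, which forces $r^d$ to act trivially on every level, hence $r^d=1$), so the only relations of $W$ that are missing in $\FG_d$ assert that adjacent rotors commute; a short argument then shows $\ker\phi$ is the normal closure of the commutators $[\sigma_i,\sigma_{i+1}]$. The self-similar action next supplies the decisive recursion: computing the first-level decomposition $\psi$ gives, after a harmless relabelling of the coordinates, $\psi([r,r^a])=([a,r],1,\dots,1)$, so that an adjacent commutator at the root is controlled by the shorter commutator $[a,r]\in\gamma_2\FG_d$ one level down. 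Iterating $\psi$ converts the assertion ``$[\sigma_i,\sigma_{i+1}]\in\gamma_c\FG_d$'' into a statement about the $(t-1)$-adic filtration of the base module, exactly as in the standard analysis of the lower central series of a regular branch group.

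The hard part, and the only place where the hypothesis on $d$ is used, is to show that this recursion does not manufacture new lower central sections below the stable level, so that the adjacent commutators are absorbed into the nilpotent residual. In $\Z_d[t]/(t^d-1)$ the element $t-1$ is nilpotent if and only if $d$ is a prime power; when $d$ is \emph{not} a prime power the module splits off a summand on which $t-1$ acts invertibly, the chain $(t-1)^cB$ stabilises at a nonzero submodule, and the recursive contributions of the $[\sigma_i,\sigma_{i+1}]$ are forced into this stable part instead of producing a fresh section at every depth. This is exactly the behaviour that fails for the prime-power groups, such as $\Grig$ and $\FG_3$, whose lower central series is infinite. Turning the module-theoretic bookkeeping of the recursion into a proof that $\ker\phi\subseteq\bigcap_c\gamma_c\FG_d$ is where I expect the real work to lie.
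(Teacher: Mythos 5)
Your formal skeleton is correct as far as it goes: the map $\phi\colon\FG_d\to\Z_d\wr\Z_d$ is well defined (all relators of Theorem~\ref{thm:FGLp} and their $\varphi$-iterates land in the abelian base), its kernel is the normal closure of the adjacent commutators $[\sigma_i,\sigma_{i+1}]$, and the proposition is equivalent to the inclusion $\ker\phi\subseteq\bigcap_c\gamma_c\FG_d$. But note that this reduction is essentially a restatement of the proposition, not progress towards it, and you explicitly leave the inclusion unproven (``where I expect the real work to lie''). So there is a genuine gap, and it is exactly the content of the statement. (For comparison: the paper itself contains no proof either --- it quotes the result from~\cite{BEH08} --- so the argument you owe is the whole argument.)

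Moreover, the heuristic you offer for closing the gap points in an unpromising direction, for two concrete reasons. First, the recursion $\psi([r,r^a])=([a,r],1,\dots,1)$ that you call ``decisive'' holds verbatim when $d$ is a prime power, where the conclusion is false ($\gamma_c\FG_3\neq\gamma_{c+1}\FG_3$ for all $c$ by~\cite{Bar05}); so that recursion, together with any fact about the module $B=\Z_d[t]/(t^d-1)$, cannot by itself yield the inclusion --- the hypothesis must act inside the nilpotent quotients of $\FG_d$, not inside $W=\Z_d\wr\Z_d$, where $\ker\phi$ is invisible. Second, your claim that single-coordinate contributions are ``forced into the stable part'' fails already in the toy model: in $B$ the element $(1,0,\dots,0)$ generates $B/(t-1)B\cong\Z_d$ and lies in no $(t-1)^cB$ with $c\geq1$. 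The way the hypothesis actually enters is much more elementary. Every quotient $Q=\FG_d/\gamma_c\FG_d$ is a finitely generated nilpotent group generated by the torsion elements $a,r$, hence finite, hence the direct product $Q=\prod_{p\mid d}Q_p$ of its Sylow subgroups. In $Q_p$ the image of $a$ has order dividing the $p$-part $q$ of $d$, and since $d$ is not a prime power, $q$ is a proper divisor with $2\leq q+1\leq d-2$. Then $\sigma_{i+1}=\sigma_{i+1+q}$ holds in $Q_p$, and the relations of Theorem~\ref{thm:FGLp} asserting that $\sigma_i$ and $\sigma_j$ commute for $2\leq|i-j|\leq d-2$ give $[\sigma_i,\sigma_{i+1}]=[\sigma_i,\sigma_{i+1+q}]=1$ in $Q_p$, for every $p\mid d$. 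Hence $[r,r^a]$ dies in every $Q_p$, therefore in $Q$, therefore $\ker\phi\subseteq\gamma_c\FG_d$ for every $c$; the rest of your reduction then finishes the proof, with no $(t-1)$-adic bookkeeping at all. This Sylow-plus-index-shifting step is the missing idea, and it is precisely where ``$d$ is not a prime power'' is used.
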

For a prime $p\geq 3$, the lower central series sections $\gamma_c\FG_p
/ \gamma_{c+1}\FG_p$ are $p$-elementary abelian. For $p = 3$, the lower
central series sections $\gamma_c \FG_3 / \gamma_{c+1}\FG_3$ were
computed in~\cite{Bar05}:
\def\0{Bartholdi~\cite{Bar05}}
\begin{proposition}[\0]
  The sections $\gamma_c\FG_3/\gamma_{c+1}\FG_3$ are $3$-elementary 
  abelian with the following $3$-ranks:
  \[  
    \rk_3(\gamma_{c}\FG_3 / \gamma_{c+1}\FG_3)
    = \left\{ \begin{array}{cl} 
       2 \textrm{ or } 1,&\textrm{ if } c = 1 \textrm{ or } c=2\textrm{, respectively}, \\[0.5ex]
       2,& \textrm{ if } c \in \{3^k+2,\ldots, 2\cdot 3^k+1\}, \\[0.5ex]
       1,& \textrm{ if } c \in \{2\cdot 3^k+2,\ldots, 3^{k+1}+1 \}
   \end{array} \right\}
  \]
  with $k \in \N_0$.  The group $\FG_3$ has finite width $2$.
\end{proposition}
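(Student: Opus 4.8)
The plan is to use the self-similar branch structure of $\FG_3$ to pass to the associated graded Lie algebra over $\mathbb{F}_3$ and to extract from the branching a self-referential recursion for its graded dimensions. First I would fix the geometric data. Writing $\Stab_1$ for the stabiliser of the first level of ${\mc T}_3$, the section map $\psi\colon\Stab_1\to\FG_3\times\FG_3\times\FG_3$ is injective, and the defining action yields the two crucial relations $a^3=1$ and $\psi(r)=(a,\id,r)$: thus $a$ is a rooted generator of order $3$ while $r$ is directed, reproducing $a$ and a further copy of $r$ one level down. Since $\FG_3$ is branch, I would also record a finite-index branching subgroup $K$ with $K\times K\times K\le\psi(K\cap\Stab_1)$, which is what makes the recursion close up.

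Next I would check that every section $\gamma_c\FG_3/\gamma_{c+1}\FG_3$ is $3$-elementary abelian. The known abelianisation $\FG_3/\FG_3'\cong\Z_3\times\Z_3$ gives $\rk_3(\gamma_1\FG_3/\gamma_2\FG_3)=2$, and for $c\ge2$ one shows $(\gamma_c\FG_3)^3\le\gamma_{c+1}\FG_3$ by pushing $3$rd powers through $\psi$ and invoking the branch relation. Consequently the associated graded object $L=\bigoplus_{c\ge1}\gamma_c\FG_3/\gamma_{c+1}\FG_3$ is a restricted Lie algebra over $\mathbb{F}_3$, generated in degree $1$ by the images $\bar a,\bar r$, and $\rk_3(\gamma_c\FG_3/\gamma_{c+1}\FG_3)=\dim_{\mathbb{F}_3}L_c$.

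The heart of the argument is a self-similar recursion for $L$. The relation $\psi(r)=(a,\id,r)$, together with the inclusion for $K$, lets one realise a degree-rescaled copy of $L$ inside $L$: a directed generator in degree $1$ feeds into a generator three levels deep, so a homogeneous class of degree $d$ in a branch reappears in degree $\approx 3d$. I would translate this into a functional equation for the Hilbert series $H(t)=\sum_{c\ge1}\dim_{\mathbb{F}_3}(L_c)\,t^c$, of the shape $H(t)=P(t)+q(t)\,H(t^3)$ with an explicit low-degree correction $P$ and multiplier $q$ read off from the degrees of $\bar a,\bar r$ and from the branching. Iterating the equation triples the block lengths at each stage and reproduces exactly the nested intervals $\{3^k+2,\dots,2\cdot3^k+1\}$, on which $\dim L_c=2$, and $\{2\cdot3^k+2,\dots,3^{k+1}+1\}$, on which $\dim L_c=1$; the base case and the correction term $P$ account for the anomalous values at $c=1,2$. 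Because no section has rank exceeding $2$, the group has finite width $2$.

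The main obstacle is proving this recursion with its boundary data exactly right, rather than merely up to a bounded error. A naive count would triple the width at each branching and thus predict unbounded width; the point is that most of the tripled classes are killed by relations, and one must identify precisely which commutators survive in each degree and show that $\psi$ maps $\gamma_c\cap\Stab_1$ onto exactly the predicted product of lower-degree terms. It is this precise control of the transversal to the rescaled copy of $L$ --- equivalently, of the low-degree part $P(t)$ and the shift --- that fixes the exact endpoints $2\cdot3^k+1$ and $3^{k+1}+1$; a purely asymptotic analysis would recover the period but not these. I would expect the elementary-abelian claim and the closing-up of the branch inclusion to be routine once $K$ is correctly chosen, and I would cross-check the small-degree data against the output of the nilpotent quotient algorithm of Section~\ref{sec:NQL}.
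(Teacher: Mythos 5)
Your overall strategy --- passing to the graded restricted Lie algebra $L=\bigoplus_{c\ge1}\gamma_c\FG_3/\gamma_{c+1}\FG_3$ over $\mathbb{F}_3$ and exploiting the branch structure to obtain a degree-rescaling self-embedding --- is exactly the method of Bartholdi's paper \cite{Bar05}, which is what the present paper cites for this proposition; the paper itself contains no proof, it simply quotes the result. So the route you chose is the right one.

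However, what you have written is a plan rather than a proof, and the gap sits precisely at the decisive point, as you yourself concede in your last paragraph. The functional equation $H(t)=P(t)+q(t)\,H(t^3)$ is only postulated ``of the shape''; you never determine $P$ and $q$, never prove that the Hilbert series satisfies an equation of this form, and never establish the key lemma that $\psi$ maps $\gamma_c\FG_3\cap\Stab_{\FG_3}(1)$ onto exactly the predicted product of lower-central terms of the three factors. That lemma \emph{is} the content of the theorem: as you note, a crude count triples the width at each level and an argument correct only up to bounded error recovers the multiplicative period $3$ but not the exact endpoints $3^k+2$, $2\cdot3^k+1$, $2\cdot3^k+2$, $3^{k+1}+1$, nor the anomalous values at $c=1,2$. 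The same applies to the elementary-abelian claim: ``pushing third powers through $\psi$'' is named but not performed, and $(\gamma_c G)^p\le\gamma_{c+1}G$ is not automatic for a general self-similar $p$-group. In \cite{Bar05} these gaps are closed by an explicit computation of the Lie graph of $\FG_3$: one lists generators of each homogeneous component, computes the adjoint action of the degree-one generators $\bar a,\bar r$, and proves by induction that the resulting graph is self-similar with a precisely identified degree shift and finite transversal; the Poincar\'e series and any functional equation it satisfies are consequences of that combinatorial description, not substitutes for it. To turn your sketch into a proof you would have to carry out this induction --- identify the surviving commutators degree by degree and verify that the branch inclusion places them where claimed; the cross-check against the nilpotent quotient algorithm of Section~\ref{sec:NQL} can detect errors in low degrees but cannot replace that induction.
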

For primes $p > 3$, little is known about the series
sections $\gamma_{c}\FG_p / \gamma_{c+1}\FG_p$ so far~\cite{BEH08}. We use the
following abbreviation to list the ranks of these sections: If the
same entry $a \in \N$ appears in $m$ consecutive places in a list,
it is listed once in the form $a^{[m]}$.  The sections $\gamma_c\FG_p /
\gamma_{c+1}\FG_p$ are $p$-elementary abelian. Their $p$-ranks are given
by the following table:
\[
  \begin{array}{cl@{\,}l@{\,}l@{\,}l@{\,}lr}
  \toprule
  p  & \multicolumn{5}{c}{\rk_p \big(\gamma_c\FG_p/
      \gamma_{c+1}\FG_p\big)}&\multicolumn{1}{c}{\textrm{class}}\\
  \midrule
  3 \rule{0ex}{2.5ex} & 2, 1^{[1]}, &2^{[1]}, 1^{[1]}, &2^{[3]},  1^{[3]},&2^{[9]},&1^{[9]}, 
      2^{[27]}, 1^{[27]}, 2^{[65]}                                     &147\\
  5 & 2, 1^{[3]}, &2^{[1]}, 1^{[13]},&2^{[5]}, 1^{[65]},&2^{[25]},&1^{[26]}   &139\\
  7 & 2, 1^{[5]}, &2^{[1]}, 1^{[33]},&2^{[7]}, 1^{[68]} &        &           &115\\
  11& 2, 1^{[9]}, &2^{[1]}, 1^{[97]}& 2^{[4]}        &        &           &112\\
  \bottomrule
  \end{array}
\]
These computational results were obtained with a parallel version of the
nilpotent quotient algorithm in~\cite{BEH08,Har08}. They were intended
to be published in~\cite{EH10}. These computational results extend those
in~\cite{BEH08} significantly so that we obtain detailed conjectures
on the structure of the lower central series sections $\gamma_c\FG_p
/ \gamma_{c+1}\FG_p$: The sections $\gamma_c\FG_p /
\gamma_{c+1}\FG_p$ are $p$-elementary abelian with the following
$p$-ranks: Write $f_p(\ell) = p + (p^2-2p-1)(p^{\ell+1} - 1)/(p-1)$
and $g_p(\ell) = f_p(\ell) + p^{\ell+1}$. Then we conjecture that
\[ 
   \rk_p(\gamma_c\FG_p/\gamma_{c+1}\FG_p) = \left\{\begin{array}{cl}
     2, &\textrm{ if } c \in \{1,p\} \textrm{ or } 
     f_p(\ell) \leq c < g_p(\ell)\textrm{ for some } \ell\in\N_0, \\
     1, &\textrm{ otherwise}
   \end{array}\right.
\]
holds. 
If this conjecture is true, the group $\FG_p$ would have finite width
$2$. For prime powers $3 \leq d \leq 11$, our implementation yields the
following results:
\begin{itemize}\addtolength{\itemsep}{-1ex}
\item For $d = 4$, the Fabrykowski-Gupta group $\FG_4$ satisfies
   \[
      \FG_4 / \FG_4' \cong \Z_4 \times \Z_4\quad\textrm{and}\quad
     \gamma_2\FG_4 / \gamma_3\FG_4 \cong \Z_4.
   \]
   For $3 \leq c \leq 141$, the sections $\gamma_c\FG_4/\gamma_{c+1} \FG_4$ are $2$-elementary
   abelian with $2$-ranks: $2^{[4]}, 3^{[3]}, 2^{[13]}, 3^{[12]}, 2^{[52]}, 3^{[48]}, 2^{[7]}$.
\item For $d = 8$, the Fabrykowski-Gupta group $\FG_8$ satisfies
   \[
     \FG_8 / \FG_8' \cong \Z_8 \times \Z_8,\qquad
     \gamma_2\FG_8 / \gamma_3\FG_8 \cong \Z_8,
   \]
   and
   \[
     \gamma_3\FG_8 / \gamma_4\FG_8 \cong
     \gamma_4\FG_8 / \gamma_5\FG_8 \cong
     \gamma_5\FG_8 / \gamma_6\FG_8 \cong
     \gamma_6\FG_8 / \gamma_7\FG_8 \cong \Z_4.
   \]
   For $7 \leq c\leq 111$, the sections $\gamma_c\FG_8 / \gamma_{c+1}\FG_8$ are $2$-elementary
   abelian with $2$-ranks:
   $2,1,2^{[2]},3,2,3^{[2]},4,3^{[8]},2^{[23]},3^{[5]},2^{[3]},1^{[8]},2^{[16]},3^{[8]},2^{[8]},3^{[16]},4$.
\item For $d = 9$, the Fabrykowski-Gupta group $\FG_9$ satisfies
    \[
      \FG_9 / \FG_9' \cong \Z_9 \times \Z_9,\quad
      \gamma_2\FG_9 / \gamma_3\FG_9 \cong \Z_9,\quad\textrm{and}\quad
      \gamma_3\FG_9 / \gamma_4\FG_9 \cong \Z_9. 
    \]
    For $4\leq c\leq 117$, the sections $\gamma_c\FG_9 /
    \gamma_{c+1}\FG_9$ are $3$-elementary abelian with $3$-ranks: 
    $1^{[5]}, 2^{[6]}, 3,2^{[17]},1^{[38]},1^{[47]}$.
\end{itemize}

%%%%%%%%%%%%%%%%%%%%%%%%%%%%%%%%%%%%%%%%%%%%%%%%%%%%%%%%%%%%%%%%%%%%%%%%%%%%
\section{Computing Dwyer Quotients of the Schur Multiplier}\Label{sec:Dwyer}
The Schur multiplier $M(G)$ of a group $G$ can be defined as the second
homology group $H_2(G,\Z)$ with integer coefficients. It is an invariant
of the group which is of particular interest for infinitely presented
groups because proving the Schur multiplier being infinitely generated
proves that the group does not admit a finite presentation. This is due
to the fact that the Schur multiplier of a finitely presented group is
finitely generated abelian which can be seen as a consequence of Hopf's
formula: If $F$ is a free group and $R \unlhd F$ a normal subgroup so
that $G \cong F/R$ holds, the Schur multiplier $M(G)$ satisfies
\begin{equation}
  M(G) \cong (R \cap F') / [R,F].\Label{eqn:HopfForm}
\end{equation}
However, a group with a finitely generated Schur multiplier is not
necessarily finitely presented~\cite{Bau71}.  For further details on
the Schur multiplier, we refer to~\cite[Chapter~11]{Rob96}.\smallskip

It is known that the Schur multiplier of a finitely $L$-presented
group (and even the Schur multiplier of a finitely presented group) is not
computable in general~\cite{Gor95}. Nevertheless, the Schur multiplier of some
self-similar groups has been computed in~\cite{Gri99,BS10}:
For instance, the Grigorchuk group $\Grig$ satisfies
\def\0{Grigorchuk~\cite{Gri99}}
\begin{proposition}[\0]
  The Schur multiplier \mbox{$M(\Grig)$} is infinitely generated
  $2$-elementary abelian. Therefore, the group $\Grig$ is not finitely
  presented.
\end{proposition}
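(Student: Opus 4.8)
The plan is to apply Hopf's formula to the finite $L$-presentation of $\Grig$ displayed above. Write $F$ for the free group on $\{a,b,c,d\}$ and let $R$ be the normal closure of $\{a^2,b^2,c^2,d^2,bcd\}\cup\bigcup_{i\geq 0}\{(ad)^4,(adacac)^4\}^{\sigma^i}$, so that $\Grig\cong F/R$ and $M(\Grig)\cong(R\cap F')/[R,F]$. Because $[R,F]$ absorbs all $F$-conjugation, the abelian group $R/[R,F]$ is generated by the countably many relator classes, and there is a homomorphism $\theta\colon R/[R,F]\to F/F'$ recording the exponent-sum vector of a relator, with kernel exactly $M(\Grig)$. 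Thus $M(\Grig)$ is the group of integral combinations of relator classes whose total exponent sums vanish in $F/F'\cong\Z^4$, and the whole problem becomes the determination of this kernel. Throughout I would use that the presentation is invariant, i.e.\ $R^\sigma\subseteq R$, so that $\sigma$ acts on the relator classes and identities established on one $\sigma$-layer propagate to the next.

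First I would establish that $M(\Grig)$ has exponent $2$. Since $\theta$ is injective on the subgroup $\langle\overline{a^2},\overline{b^2},\overline{c^2},\overline{d^2}\rangle$ of $R/[R,F]$ (their images $2e_a,\dots,2e_d$ are independent in $\Z^4$), it suffices to show that for every $x\in R\cap F'$ the double $2x$ is congruent modulo $[R,F]$ to a $\Z$-combination of $\overline{a^2},\dots,\overline{d^2}$: applying $\theta$ then forces that combination to be zero, whence $2x\in[R,F]$. Concretely I would verify, relator by relator, that twice each iterated class $\overline{(ad)^{4\sigma^i}}$ and $\overline{(adacac)^{4\sigma^i}}$ reduces to such a square-relator combination. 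This is a finite computation on the single $\sigma$-layer $i=0$, and invariance $R^\sigma\subseteq R$ carries it uniformly to all layers. This identifies $M(\Grig)$ with an $\mathbb{F}_2$-vector space and reduces everything to computing its dimension.

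The heart of the argument is to show this dimension is infinite, and this is where I expect the main obstacle. For each $i$ I would correct an iterated relator to zero exponent sum, setting $w_i=(adacac)^{4\sigma^i}\,z_i$ with $z_i$ a suitable product of the $a^{\pm2},\dots,d^{\pm2}$ so that $\theta(\overline{w_i})=0$ and hence $w_i\in R\cap F'$. The task is then to prove that $\overline{w_0},\overline{w_1},\overline{w_2},\dots$ are $\mathbb{F}_2$-linearly independent in $M(\Grig)$; the difficulty is precisely that the relation $R^\sigma\subseteq R$ a priori threatens to make deeper iterated relators redundant consequences of shallower ones modulo $[R,F]$. To defeat this I would exploit the self-similar branch structure of $\Grig$: the level-$n$ stabilizer embeds in a product of copies of $\Grig$, and projecting onto coordinates deep in the tree yields, for each $n$, a homomorphism out of $R\cap F'$ that detects $\overline{w_n}$ while annihilating $\overline{w_0},\dots,\overline{w_{n-1}}$. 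Equivalently, writing $G_n=F/R_n$ for the truncated finite presentations using only the layers $\sigma^0,\dots,\sigma^n$, I would show that $\dim_{\mathbb{F}_2}M(G_n)$ strictly increases and that the transition maps do not collapse the new class at each stage; since homology commutes with direct limits and $\Grig$ is the direct limit of the surjections $G_0\twoheadrightarrow G_1\twoheadrightarrow\cdots$, the infinite family survives in $M(\Grig)$. Finally, as a finitely presented group has finitely generated Schur multiplier (a consequence of Hopf's formula noted above), the infinite generation of $M(\Grig)$ shows that $\Grig$ is not finitely presented, completing the proof.
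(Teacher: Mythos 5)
You should first be aware that the paper contains no proof of this proposition: it is quoted from Grigorchuk~\cite{Gri99}, so your proposal is in effect an attempt to reconstruct Grigorchuk's theorem itself. Your setup is the standard and correct one: Hopf's formula for $\Grig\cong F/R$, the homomorphism $\theta\colon R/[R,F]\to F/F'$ with kernel $(R\cap F')/[R,F]\cong M(\Grig)$, the fact that $R/[R,F]$ is generated by the relator classes (conjugation acts trivially modulo $[R,F]$), and the action of $\sigma$ on $R/[R,F]$ coming from $R^\sigma\subseteq R$. Even the exponent-$2$ step, however, is not as self-propagating as you claim: $\sigma$ does not permute the square relators, e.g.\ $(a^2)^\sigma=(aca)^2$, which is not on the relator list and must first be rewritten as $2\,\overline{a^2}+\overline{c^2}$ modulo $[R,F]$ (using $(aca)^2=(a^2)^{(ac)^{-1}}(c^2)^{a^{-1}}a^2$) before ``invariance carries it uniformly to all layers''; and the layer-$0$ congruences themselves are asserted as ``a finite computation'' but never exhibited.

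The genuine gap is the infinite-rank step, which you yourself flag as the main obstacle and then only gesture at. There are two problems. First, the direct-limit framing is logically insufficient as stated: group homology does commute with direct limits, and $M(\Grig)$ is the direct limit of the $M(G_n)$, but ``the transition maps do not collapse the new class at each stage'' does not prevent a class born at stage $n$ from dying three stages later or in the limit; what is needed is that for every $n$ the images of $\overline{w_0},\dots,\overline{w_n}$ remain independent in $M(G_m)$ for \emph{all} $m\geq n$, and that statement is of the same difficulty as the theorem. Second, the proposed remedy---homomorphisms ``out of $R\cap F'$'' obtained by projecting onto deep tree coordinates---is not constructed: the coordinate projections are defined on level stabilizers of $\Grig$, i.e.\ on subgroups of $F/R$, whereas to detect classes in $M(\Grig)$ you need functionals on $(R\cap F')/[R,F]$, i.e.\ maps that provably vanish on $[R,F]$ and separate the $\overline{w_i}$; nothing in the proposal produces these or verifies either property. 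That missing construction is exactly the assertion that Lys{\"e}nok's relators form an independent system of relations modulo $[R,F]$, which is the actual content of Grigorchuk's result; the remaining ingredients of your outline (Hopf's formula, exponent-sum bookkeeping, and the concluding observation that finitely presented groups have finitely generated multiplier) are routine. As it stands, the proposal is a plausible plan of attack, not a proof.
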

There are various examples of self-similar groups for which nothing is
known on their Schur multiplier. Even though the Schur multiplier $M(G)$ 
is not computable in general, it is possible to compute successive quotients of $M(G)$
provided that the group $G$ is given by an invariant
finite $L$-presentation~\cite{Har10}. These quotients often exhibit periodicities as
well: For instance, our experiments with the implementation of the algorithm 
in~\cite{Har10} suggest  that the Schur multiplier of
the Fabrykowski-Gupta groups $\FG_d$, for a prime-power $d = p^\ell$, is
infinitely generated. The algorithm for computing successive quotients
of $M(G)$ provides a first method to investigate the structure of the
Schur multiplier of an invariantly finitely $L$-presented group (and
even the Schur multiplier of a finitely presented group).\smallskip

We briefly sketch the idea of this algorithm: Let $G$ be an invariantly
finitely $L$-presented group. Write $G \cong F / K$ for a free group
$F$ and a normal subgroup $K$. Then $G/\gamma_cG \cong F/K\gamma_cF$.
We identify $M(G)$ with $(K \cap F')/[K,F]$ and $M(G/\gamma_cG)$ with
$(K\gamma_cF\cap F')/[K\gamma_cF,F]$ and define
\[
  \varphi_c\colon M(G)\to M(G/\gamma_cG),\: g[K,F]\mapsto g[K\gamma_cF,F].
\]
Then $\varphi_c$ is a homomorphism of abelian groups. In the induction step
of the nilpotent quotient algorithm, the algorithm
computes a homomorphism $F \to F/[K\gamma_cF,F]$. This homomorphism allows
us to compute the image of the Schur multiplier $M(G)$ in $M(G/\gamma_cG)$. In particular, 
it allows us to compute the isomorphism type of the \emph{Dwyer
quotient} $M_c(G) = M(G)/\ker\varphi_c$, for $c \in \N$, where 
\[
  M(G) \geq \ker\varphi_1 \geq \ker\varphi_2 \geq \ldots.
\]
The algorithm for computing $M_c(G)$ has been implemented in 
\Gap. Its implementation allows us to compute the Dwyer
quotients of various self-similar groups: Since the Schur multiplier
of the Grigorchuk group $\Grig$ is $2$-elementary abelian, the Dwyer
quotients of $\Grig$ are $2$-elementary abelian.  We have computed the
Dwyer quotients $M_c(\Grig)$ for $1\leq c\leq 301$. These quotients are
$2$-elementary abelian with the following $2$-ranks:
\[
  1,2,3^{[3]},5^{[6]},7^{[12]}, 9^{[24]}, 11^{[48]}, 13^{[96]}, 15^{[110]}.
\]
These experiments suggest that the Grigorchuk group satisfies
\[
 M_c(\Grig) \cong \left\{\begin{array}{cl}
 \Z_2\textrm{ or }(\Z_2)^2,&\textrm{if }c=1\textrm{ or }c=2\textrm{, respectively},\\
 (\Z_2)^{2m+3},&\textrm{if }c\in\{3\cdot 2^m,\ldots,3\cdot 2^{m+1}-1\},
 \end{array}\right\}
\]
with $m\in\N_0$. For the Fabrykowski-Gupta groups $\FG_d$, 
the algorithm in~\cite{Har10} yields first
insight into the structure of $M(\FG_d)$: We restrict ourself to the
groups $\FG_d$ for prime powers $d = p^\ell$ because,
otherwise, the groups have a maximal nilpotent quotient by Proposition~\ref{prop:MaxNilQuot}. For a 
prime $p \in \{3,5,7,11\}$, the Dwyer quotients $M_c(\FG_p)$ are 
$p$-elementary abelian groups with the following $p$-ranks:
\[
  \begin{array}{cl@{\,}l@{\,}l@{\,}l@{\,}l@{\,}l@{\,}l@{\,}l@{\,}l@{\,}l@{\,}}
    \toprule
    p &\multicolumn{10}{c}{\rk_p(M_c(\FG_p))} \\
    \midrule
    3\rule{0ex}{2.5ex} &0^{[2]},&1^{[3]},& 2^{[0]},& 3^{[9]},& 4^{[1]}, &5^{[26]},& 6^{[4]}, &7^{[77]},& 8^{[13]}, &9^{[12]} \\ % cl = 147
    5&0^{[1]}, &1^{[4]}, &2^{[2]}, &3^{[20]}, &4^{[10]}, &5^{[100]},&6^{[1]}&&& \\ % cl = 138
    7&0^{[1]},& 1^{[2]}, &2^{[6]},& 3^{[2]},& 4^{[14]}, &5^{[42]}, &6^{[14]},&7^{[34]} &&\\ % cl = 115
    11&0^{[1]}, &1^{[2]},&2^{[2]},& 3^{[2]},& 4^{[10]}, &5^{[2]}, &6^{[22]}, &7^{[22]}, &8^{[22]}, &9^{[27]} \\  % cl = 112
    \bottomrule
  \end{array}
\]
As noted by Bartholdi, these experimental results suggest that
\[
  \rk_3(M_{c+1}(\FG_3)) = \left\{
  \begin{array}{cl}
   2\left\lfloor\log_3\left(\frac{2c-1}{10}\right) \right\rfloor + 3, 
   & \textrm{ if }\log_3(2c-1) \in \Z,\\[0.75ex]
    \left\lfloor \log_3(2c-1) \right\rfloor
  + \left\lfloor \log_3\left(\frac{2c-1}{10}\right) \right\rfloor
  + 1, &\textrm{ otherwise,}
  \end{array}\right.
\]
for $c\geq 6$.  Our results for the Dwyer quotients $M_c(\FG_d)$,
for $d\in\{4,8,9\}$, are shown in Table~\ref{tab:Dwyer}
where we list the abelian invariants of $M_c(G)$. Here, a
list $(\alpha_1,\ldots,\alpha_n)$ stands for the abelian group
$\Z_{\alpha_1}\times\cdots\times\Z_{\alpha_n}$. Again, we list the abelian
invariants $(\alpha_1,\ldots,\alpha_n)^{[m]}$ just once if they appear
in $m$ consecutive places.
\begin{table}[ht]
  \begin{center}
  \label{tab:Dwyer}
  \caption{Dwyer quotients of the Fabrykowski-Gupta groups $\FG_d$}\bigskip

  \begin{tabular}{lr}
    \toprule
    \multicolumn{1}{c}{$d$} & \multicolumn{1}{c}{$M_c(\FG_d)$} \\
    \midrule
    & \raisebox{0ex}[2.5ex]{}
    $(1)^{[1]}$ 
    $(2)^{[1]}$ 
    $(2,2)^{[1]}$ 
    $(2,4)^{[4]}$ 
    $(2,2,2,4)^{[1]}$ \\ 4 &
    $(2,2,2,2,4)^{[4]}$  
    $(2,2,2,4,4)^{[16]}$ 
    $(2,2,2,2,4,4)^{[1]}$ 
    $(2,2,2,2,2,4,4)^{[3]}$  \\ &
    $(2,2,2,2,2,2,4,4)^{[16]}$ 
    $(2,2,2,2,2,4,4,4)^{[64]}$ 
    $(2,2,2,2,2,2,4,4,4)^{[5]}$ \\ &
    $(2,2,2,2,2,2,2,4,4,4)^{[11]}$ 
    $(2,2,2,2,2,2,2,2,4,4,4)^{[26]}$ \\
    \midrule
    & \raisebox{0ex}[2.5ex]{}
    $(1) ^ {[1]}$
    $(8) ^ {[2]}$
    $(4,8) ^{[3]}$
    $(2,4,8) ^{[4]}$
    $(2,8,8) ^{[1]}$
    $(2,2,8,8) ^{[2]}$\\ &
    $(2,2,2,8,8) ^{[2]}$
    $(2,2,4,8,8) ^{[2]}$
    $(2,4,4,8,8) ^{[2]}$
    $(2,4,8,8,8) ^{[2]}$\\ \raisebox{1.5ex}[-1.5ex]{8} &
    $(2,8,8,8,8) ^{[8]}$
    $(2,2,8,8,8,8) ^{[4]}$
    $(2,4,8,8,8,8) ^{[20]}$
    $(2,2,4,8,8,8,8) ^{[32]}$\\ &
    $(2,2,8,8,8,8,8) ^{[7]}$
    $(2,2,2,8,8,8,8,8) ^{[16]}$
    $(2,2,2,2,8,8,8,8,8) ^{[16]}$\\ & 
    $(2,2,2,4,8,8,8,8,8) ^{[16]}$
    $(2,2,4,4,8,8,8,8,8) ^{[3]}$\\
    \midrule
    & \raisebox{0ex}[2.5ex]{}
    $(1) ^ {[1]}$
    $(9) ^ {[2]}$
    $(3,9) ^ {[2]}$
    $(3,3,9) ^ {[4]}$
    $(3,9,9) ^ {[2]}$ \\ &
    $(9,9,9) ^ {[2]}$
    $(3,9,9,9) ^ {[2]}$
    $(3,3,9,9,9) ^ {[4]}$
    $(3,9,9,9,9) ^ {[2]}$ \\ \raisebox{1.5ex}[-1.5ex]{9} &
    $(9,9,9,9,9) ^ {[12]}$
    $(3,9,9,9,9,9) ^ {[18]}$
    $(3,3,9,9,9,9,9) ^ {[36]}$ \\ &
    $(3,9,9,9,9,9,9) ^ {[18]}$
    $(9,9,9,9,9,9,9) ^ {[17]}$
    $(3,9,9,9,9,9,9,9) ^ {[12]}$\\
    \bottomrule
  \end{tabular}
  \end{center}
\end{table}
% \begin{itemize}
% \item A standard technique in computational group theory for dealing with
%   non-solvable problems is to deduce algorithms that may allow to compute,
%   say quotient of the object.
% \end{itemize}

%%%%%%%%%%%%%%%%%%%%%%%%%%%%%%%%%%%%%%%%%%%%%%%%%%%%%%%%%%%%%%%%%%%%%%%%%%%%
\section{Coset Enumeration for Finite Index Subgroups}\Label{sec:TC}
A standard algorithm for finitely presented groups is the \emph{coset
enumerator} introduced by Todd and Coxeter~\cite{TC36}. Coset enumeration
is an algorithm that, given a finite generating set of a subgroup $H\leq
G$, computes the index $[G:H]$ provided that this index is finite. Its
overall strategy is to compute a permutation representation for the
group's action on the right-cosets $H \bs G$. For finitely presented
groups, coset enumeration techniques have been investigate for some
time~\cite{Lee63,CDHW73,Neu82,Sims94}. They allow computer algorithms
to be applied in the investigation of finitely presented groups by
their finite index subgroups~\cite{HR94}. It was shown in~\cite{Har11},
that even finitely $L$-presented groups allow one to develop a coset
enumeration process. This latter algorithm reduces the computation to
finite presentations first and then it proves correctness of the obtained
result.  A coset enumerator for finitely $L$-presented groups has various
interesting applications: For instance, it allows one to compute low-index
subgroups, as suggested in~\cite{DSch74}, and it solves the generalized
word problem for finite index subgroups~\cite{Har11}.\smallskip

We briefly sketch the idea of the coset enumeration process in~\cite{Har11}
in the following.
Let $G = \la\X\mid\Q\mid\Phi\mid\R\ra$ be a finitely $L$-presented
group.  Suppose that a subgroup $H \leq G$ is given by its finitely
many generators $\{g_1,\ldots,g_n\}$. We consider the generators
$g_1,\ldots,g_n$ as elements of the free group $F$ over $\X$. Then $E
= \la g_1,\ldots,g_n \ra \leq F$ satisfies $H \cong \UK/K$ where $K
= \la \Q \cup \bigcup_{\sigma\in\Phi^*} \R^\sigma\ra^F$ is the kernel of 
the free presentation. We are to compute the index
$[G:H] = [F:\UK]$. For this purpose, we define $\Phi_\ell =
\{ \sigma \in \Phi^* \mid \|\sigma\| \leq \ell\}$ where $\| \cdot \|$
denotes the usual word-length in the free monoid $\Phi^*$. Consider 
the finitely presented groups $G_\ell = F/K_\ell$ given by the finite presentation
\begin{equation}
  G_\ell = \Big\la \X\:\Big|\: \Q \cup \bigcup_{\sigma \in \Phi_\ell} \R^\sigma\Big\ra.\Label{eqn:FPCov}
\end{equation}
Then $G_\ell$ naturally maps onto $G$ and we obtain a series of subgroups
\[
  \UK_0 \leq \UK_1 \leq \ldots \leq \UK \leq F.
\]
Since $\UK \leq F$ is a finite index subgroup of a finitely generated
group, it is finitely generated by $u_1,\ldots, u_n$, say.  Furthermore,
we have $\UK = \bigcup_{\ell\geq 0} \UK_\ell$. For each $u_i\in \UK$,
there exists $n_i\in\N_0$ so that $u_i \in \UK_{n_i}$. For $m = \max\{ n_i
\mid 1\leq i\leq n\}$ we have $\{u_1,\ldots,u_n\} \subseteq \UK_m$. Thus
$\UK = \UK_m$. In fact, there exists a positive integer $m \in\N_0$
so that $H$ has finite index in the finitely presented group $G_m = \la
\X \mid \Q \cup \bigcup_{\sigma \in \Phi_m} \R^\sigma\ra$.\smallskip

Coset enumeration for finitely presented groups allows us to compute
a permutation representation $\pi\colon F \to \Sym(\UK_m \bs F)$. The
integer $m$ cannot be given \emph{a priori}. However, various coset
enumerators can be applied in parallel to the finitely presented groups
$G_\ell$. In theory, termination is guaranteed for a sufficiently large
integer $\ell$ if $[G:H]$ is finite. Suppose that one coset enumerator has
terminated for $[G_\ell:H]$ and suppose that it has computed a permutation
representation $\pi_\ell\colon F \to \Sym(\UK_\ell\bs F)$.  Then $[G:H]
= [F:\UK]$ divides the index $[G_\ell:H] = [F:\UK_\ell]$. It suffices
to check whether or not $\pi_\ell$ induces a group homomorphism $G
\to \Sym(\UK_\ell\bs F)$. In this case, we obtain $[G_\ell:H] = [G:H]$
and $\pi_\ell$ is a permutation representation for $G$'s action on the
right-cosets $H \bs G$. Otherwise, we have to enlarge the index $\ell$
and we would finally compute the index $[G:H]$ in this way. The following
theorem was proved in~\cite{Har11}:
\begin{theorem}
  For a finitely $L$-presented group $G = \la\X\mid\Q\mid\Phi\mid\R\ra$
  and a homomorphism $\pi\colon F \to H$ into a finite group $H$,
  there exists an algorithm that decides whether or not $\pi$
  induces a group homomorphism $G \to H$.
\end{theorem}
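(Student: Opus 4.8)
The plan is to reduce the single condition ``$\pi$ induces a homomorphism $G\to H$'' to finitely many tests inside $H$. Write $G = F/K$ with $K = \la\Q\cup\bigcup_{\sigma\in\Phi^*}\R^\sigma\ra^F$. Since $\pi\colon F\to H$ is a homomorphism, its kernel is already normal in $F$; hence $\pi$ factors through $G$ if and only if $K\subseteq\ker\pi$, and this holds if and only if $\pi$ kills each normal generator of $K$. Thus $\pi$ induces a homomorphism $G\to H$ exactly when $\pi(q)=1$ for all $q\in\Q$ and $\pi(r^\sigma)=1$ for all $r\in\R$ and all $\sigma\in\Phi^*$. The first family is a finite computation; the difficulty is the second, which a priori ranges over the infinite monoid $\Phi^*$.

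The key observation is that $\pi(r^\sigma)=\pi(\sigma(r))=(\pi\circ\sigma)(r)$ depends on $\sigma$ only through the composite homomorphism $\pi\circ\sigma\colon F\to H$, and there are only finitely many homomorphisms $F\to H$: each is determined by the images of the finite generating set $\X$, so $\mathrm{Hom}(F,H)$ has cardinality $|H|^{|\X|}$. The generators $\Phi$ act on this finite set by $\psi\mapsto\psi\circ\phi$ for $\phi\in\Phi$, and this extends to an action of $\Phi^*$; the set $\{\pi\circ\sigma\mid\sigma\in\Phi^*\}$ is precisely the forward orbit of $\pi$ under this action. I would compute it by a breadth-first closure: represent each homomorphism by the tuple $(\psi(x))_{x\in\X}\in H^{|\X|}$; start with $\pi$; and repeatedly, for every homomorphism already found and every $\phi\in\Phi$, adjoin $\psi\circ\phi$, whose value on a generator $x$ is obtained by evaluating $\psi$ on the explicit word $\phi(x)\in F$. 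Because the candidate set is bounded by $|H|^{|\X|}$, this closure terminates and returns a finite set $T$. One checks directly that starting from $\pi=\pi\circ\id$ and composing with generators reaches every $\pi\circ\sigma$, so $T$ enumerates the orbit exactly.

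With $T$ in hand the algorithm is immediate: $\pi$ induces a homomorphism $G\to H$ if and only if $\pi(q)=1$ for every $q\in\Q$ and $\psi(r)=1$ for every $\psi\in T$ and every $r\in\R$; taking $\sigma=\id$ shows $\pi\in T$, so the base relations $r\in\R$ are already included. All of these are finitely many evaluations in the finite group $H$, so the procedure decides the question. The main obstacle is conceptual rather than computational: one must see past the infinite indexing monoid $\Phi^*$ and recognize that the genuinely relevant data is the finite orbit of $\pi$ in $\mathrm{Hom}(F,H)$. Once that is in place, the only things to verify are that the closure enumerates this orbit exactly and that every iterated relator $r^\sigma$ is tested through its composite $\pi\circ\sigma\in T$, both of which follow from the monoid action structure.
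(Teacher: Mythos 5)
Your proof is correct and complete: the reduction of the problem to deciding $K\subseteq\ker\pi$, the key observation that $\pi(r^\sigma)=(\pi\circ\sigma)(r)$ depends on $\sigma$ only through the composite $\pi\circ\sigma$ lying in the finite set $\mathrm{Hom}(F,H)$ of size at most $|H|^{|\X|}$, and the terminating breadth-first closure of the orbit of $\pi$ under precomposition with the generators $\Phi$ together give a valid decision procedure, with every step effectively computable from the $L$-presentation data. The paper itself gives no inline argument (its ``proof'' is only a pointer to the reference for an explicit algorithm), and your orbit computation is essentially the argument used there, so you have supplied, correctly, exactly the proof the paper omits.
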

\begin{proof}
  For an explicit algorithm, we refer to~\cite{Har11}.
\end{proof}
Coset enumeration for finitely $L$-presented groups allows various
computations with finite index subgroups; e.g. computing the intersection
of two finite index subgroups, computing the core of a finite index
subgroup, solving the generalized word problem for finite index
subgroups, etc. In the following, we demonstrate the application of our
coset enumerator to the Fabrykowski-Gupta groups $\FG_p$. In
particular, we show how to compute the number of finite index subgroups
with a moderate index.

%%%%%%%%%%%%%%%%%%%%%%%%%%%%%%%%%%%%%%%%%%%%%%%%%%%%%%%%%%%%%%%%%%%%%%%%%%%%
\subsection{An Application of Coset Enumeration: Low-Index Subgroups}\Label{sec:LowX}
As an application of the coset enumeration process, we consider subgroups
with small index in a finitely $L$-presented group. Since the finitely
presented group $G_\ell$ from Eq.~(\ref{eqn:FPCov}) naturally maps onto
the finitely $L$-presented group $G$, it suffices to compute low-index
subgroups of the finitely presented group $G_\ell$. These subgroups
map to subgroups of $G$ with possibly smaller index. On the other hand,
each finite index subgroup of $G$ has a full preimage with same index
in $G_\ell$. Therefore it remains to remove duplicates from the list
of subgroups obtained from the finitely presented group $G_\ell$. For
finitely presented groups, an algorithm for computing all subgroups up to
a given index was described in~\cite{DSch74}. An implementation of this
algorithm can be found in~\cite{LOWX}.  This implementation includes an
algorithm for computing only the normal subgroups of a finitely presented
group~\cite{CD05}. The latter algorithm allows to deal with possibly
larger indices than the usual low-index subgroup algorithms.\smallskip

We first consider the Grigorchuk group $\Grig$: its lattice of
normal subgroups is well-understood~\cite{Bar05,CST01} while its
lattice of finite index subgroups is widely unknown~\cite{Gr05}. It
is known that the Grigorchuk group has seven subgroups of index
two~\cite{Gr05}. In~\cite{Per00}, it was shown that these index-two
subgroups are the only maximal subgroups of $\Grig$. The implementation
of our coset enumeration process allows us to compute the number of
subgroups with index at most $64$ in the group $\Grig$~\cite{Har11}. Our
computations correct the counts in~\cite[Section~7.4]{BGZ03}
and~\cite[Section~4.1]{BG02}. The following list summarizes the number
of subgroups ($\leq$) and the number of normal subgroups ($\unlhd$)
of $\Grig$:
\[
  \begin{array}{cccccccc}
    \toprule
    {\rm index} & 1 & 2 & 4 & 8 & 16 & 32 & 64\\ 
    \midrule
     \leq       & 1 & 7 & 31 & 183 & 1827 & 22931 & 378403 \\
     \unlhd     & 1 & 7 & 7  & 7   & 5    & 3     & 3 \\
    \bottomrule
  \end{array}
\]
For the Fabrykowski-Gupta groups $\FG_p$, where $3 \leq p \leq 11$ is prime,
we only found subgroups with prime-power index in $\FG_p$. Their counts
are as follows:
\[
  \begin{array}{ccccccccc}
   \toprule
     & \multicolumn{2}{c}{p = 3} & \multicolumn{2}{c}{p = 5 } & \multicolumn{2}{c}{p=7} & \multicolumn{2}{c}{p=11} \\
   \raisebox{1.5ex}[-1.5ex]{index} & \leq & \unlhd & \leq & \unlhd & \leq & \unlhd & \leq & \unlhd \\
   \midrule 
   p^0 & 1  & 1 & 1 & 1 & 1 & 1 & 1 & 1 \\
   p^1 & 4  & 4 & 6 & 6 & 8 & 8 & 12&12 \\
   p^2 & 31 & 1 &806& 1 & ? & 1 & ? & 1 \\
   p^3 &1966& 1 & ? & 1 & ? & ? & ? & ? \\
   p^4 & ?  & 4 & ? & ? & ? & ? & ? & ? \\
   p^5 & ?  & 1 & ? & ? & ? & ? & ? & ? \\
   p^6 & ?  & 1 & ? & ? & ? & ? & ? & ? \\
   p^7 & ?  & 4 & ? & ? & ? & ? & ? & ? \\
   \bottomrule
  \end{array}
\]
Here '$?$' denotes an index where our computations did not terminate
within a reasonable amount of time.  The only normal subgroups with index
$p^2$ are the derived subgroups since $\Gamma_p / \Gamma_p' \cong \Z_p
\times \Z_p$ holds~\cite{Gri00}.  For a prime power index $d = p^\ell$,
the groups $\FG_d$ only admit subgroups with prime power index $p^j$:
\[
  \begin{array}{ccccccc}
   \toprule
     & \multicolumn{2}{c}{p^\ell = 2^2} & \multicolumn{2}{c}{p^\ell = 2^3 } & \multicolumn{2}{c}{p^\ell = 3^2} \\
   \raisebox{1.5ex}[-1.5ex]{index} & \leq & \unlhd & \leq & \unlhd & \leq & \unlhd \\
   \midrule 
   p^0 & 1  & 1 & 1 & 1 & 1 & 1 \\
   p^1 & 3  & 3 & 3 & 3 & 4 & 4 \\
   p^2 & 19 & 7 &19 & 7 &76 &13 \\
   p^3 &211 & 7 &163&19 & ? & ? \\
   p^4 &2419&11 &2227&23 & ? & ? \\
%  p^5 &    &   &   &   &   &   \\
%  p^6 &    &   &   &   &   &   \\
%  p^7 &    &   &   &   &   &   \\
   \bottomrule
  \end{array}
\]
For the groups $\FG_6$ and $\FG_{10}$, we obtain the following subgroup counts:
\[
  \begin{array}{ccccc}
    \toprule
    & \multicolumn{2}{c}{\FG_6} & \multicolumn{2}{c}{\FG_{10}} \\
    \raisebox{1.5ex}[-1.5ex]{\rm index} & \leq & \unlhd & \leq & \unlhd \\
    \midrule
    1 &	1  & 1  & 1   & 1 \\
    2 & 3  & 3  & 3   & 3 \\
    3 & 7  & 4  & 0   & 0 \\
    4 & 9  & 1  & 5   & 1 \\
    5 & 0  & 0  & 11  & 6 \\
    6 & 39 & 13 & 0   & 0 \\
    7 & 0  & 0  & 0   & 0 \\
    8 & 45 & 1  & 1   & 1 \\
    9 & 79 & 1  & 0   & 0 \\
    10& 0  & 0  & 113 & 19\\
    \bottomrule
  \end{array} \qquad\qquad\quad
  \begin{array}{ccccc}
    \toprule
    & \multicolumn{2}{c}{\FG_6} & \multicolumn{2}{c}{\FG_{10}} \\
    \raisebox{1.5ex}[-1.5ex]{\rm index} & \leq & \unlhd & \leq & \unlhd \\
    \midrule
    11 & 0   & 0 & 0  &0\\
    12 & 219 & 6 & 0  &0\\
    13 & 0   & 0 & 0  &0\\
    14 & 0   & 0 & 0  &0\\
    15 & 0   & 0 & 0  &0\\
    16 & 188 & 0 & 16 &0\\
    17 & 0   & 0 & 0  &0\\
    18 &1299 & 7 & 0  &0\\
    19 & 0   & 0 & 0  &0\\
    20 & 0   & 0 & ?  &?\\
    \bottomrule
  \end{array}
\]
%\[
%  \begin{array}{cccccccccccccccccccccc}
%    \toprule
%    \multicolumn{2}{c}{\textrm{index}} & 1 & 2 & 3 & 4 & 5 & 6 & 7 & 8 & 9 & 10 & 11 & 12 & 13 & 14 & 15 & 16 & 17 & 18 & 19 & 20\\
%    \midrule
%    & \leq  & 1 & 3& 7& 9& 0&39& 0&45&79& 0&0&219& 0& 0& 0&188 & 0&1299 &0&0\\ \raisebox{1.5ex}[-1.5ex]{$\FG_6$}
%    &\unlhd & 1 & 3& 4& 1& 0&13& 0& 1& 1& 0& 0& 6& 0& 0& 0&  0 & 0& 7   &0&0\\
%    \midrule
%    & \leq  & 1 & 3& 0& 5&11& 0& 0& 1& 0&113&0& 0& 0& 0& 0& 16 & 0& 0 &0& \\ \raisebox{1.5ex}[-1.5ex]{$\FG_{10}$}
%    &\unlhd & 1 & 3& 0& 1& 6& 0& 0& 1& 0&19& 0& 0& 0& 0& 0&  0 & 0& 0 &0& \\
%    \bottomrule
%  \end{array}
%\]

%%%%%%%%%%%%%%%%%%%%%%%%%%%%%%%%%%%%%%%%%%%%%%%%%%%%%%%%%%%%%%%%%%%%%%%%%%%%
\section{Computing Solvable Quotients}\Label{sec:DerSer}
The coset enumeration process in~\cite{Har11} was used to prove the
following version of the Reide\-meister-Schreier theorem for finitely
presented groups in~\cite{Har11b}:
\begin{theorem}\Label{thm:ReidSchrApps}
  Each finite-index subgroup of a finitely $L$-presented
  group is finitely $L$-presented. 
\end{theorem}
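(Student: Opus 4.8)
The plan is to prove that any finite-index subgroup $H$ of a finitely $L$-presented group $G = \la\X\mid\Q\mid\Phi\mid\R\ra$ is itself finitely $L$-presented by combining the coset enumeration machinery with a Reidemeister-Schreier rewriting process. The key structural observation is the one already developed in Section~\ref{sec:TC}: writing $G = F/K$ with $K = \la \Q \cup \bigcup_{\sigma\in\Phi^*}\R^\sigma\ra^F$ and lifting $H$ to a subgroup $\U K \leq F$, there exists a finite level $m$ so that $H$ already has finite index in the \emph{finitely presented} cover $G_m = \la\X \mid \Q\cup\bigcup_{\sigma\in\Phi_m}\R^\sigma\ra$, and $\U K = \U K_m$. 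So coset enumeration gives us, effectively, a transversal $T$ for $\U K$ in $F$ and a concrete permutation representation of $F$ on the cosets.

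First I would run the coset enumerator of Section~\ref{sec:TC} to obtain a finite Schreier transversal $T$ for $\U K \bs F$ together with the induced coset action. Second, I would apply the classical Reidemeister-Schreier procedure to rewrite the relators of $G$ in terms of the Schreier generators of $\U K$. The finite set $\Q$ and the finitely many relators in $\R$ (before applying substitutions) rewrite into finitely many relators of the subgroup, which will play the role of the new $\Q'$ and $\R'$. The genuinely new ingredient — and the reason the result is not merely the ordinary Reidemeister-Schreier theorem — is that the \emph{infinite} family $\bigcup_{\sigma\in\Phi^*}\R^\sigma$ must be rewritten in a way that is itself captured by finitely many substitutions. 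The plan here is to exploit the recursive structure: each endomorphism $\sigma\in\Phi$ permutes cosets in a controlled way, so I would construct new substitutions $\Phi'$ on the free group over the Schreier generators that track how $\sigma$ interacts with the transversal $T$. Concretely, one shows that rewriting $\R^{\sigma\tau}$ can be obtained by applying a rewritten substitution to the rewriting of $\R^\tau$, so that the full infinite relator set of $H$ is generated from the finitely many rewritten relators under the action of $\Phi'^*$.

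The hard part will be the compatibility step: verifying that the Reidemeister-Schreier rewriting commutes with the substitution action well enough that a \emph{finite} set $\Phi'$ of substitutions on the subgroup's free group reproduces all of $\bigcup_{\sigma\in\Phi^*}\R^\sigma$ after rewriting. This requires that the cosets be permuted compatibly by the $\sigma\in\Phi$ — equivalently, that $\U K$ be suitably invariant — and some care is needed because an arbitrary subgroup need not be invariant under the $\sigma$, so the transversal must be chosen (or enlarged) so that the substitution-induced coset maps are well-defined as maps on the Schreier generators. I expect this to be the main technical obstacle, and I would address it by passing to the level $m$ furnished by the coset enumeration bound, where the interaction between the substitutions and the finite coset action stabilizes; once that stabilization is established, assembling the finitely many rewritten relators in $\Q'$ and $\R'$ together with the finite substitution set $\Phi'$ into a finite $L$-presentation $\la\X'\mid\Q'\mid\Phi'\mid\R'\ra$ for $H$ is routine bookkeeping. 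For the explicit algorithm and the verification of these compatibilities, I would refer to~\cite{Har11b}.
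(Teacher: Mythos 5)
Your proposal takes essentially the same approach as the paper: the paper's entire proof of this theorem is the single sentence ``For a constructive proof, we refer to~\cite{Har11b}'', and your outline---coset enumeration to obtain a Schreier transversal, Reidemeister--Schreier rewriting of $\Q$ and $\R$, and a new finite set $\Phi'$ of substitutions tracking how $\Phi$ interacts with the transversal---is precisely the strategy implemented in that reference, to which you likewise defer the hard compatibility step. One caveat on your sketch: the mechanism that tames the non-invariance of the subgroup under $\Phi$ is not the level-$m$ bound from coset enumeration (that bound only guarantees that $H$ has finite index in the finitely presented cover $G_m$ and says nothing about how $\Phi$ moves cosets); rather, writing $U \leq F$ for the full preimage of $H$, it is the observation that the preimages $\sigma^{-1}\big(t^{-1}Ut\big)$, for $\sigma \in \Phi^*$ and $t \in F$, all have index at most $[F:U]$ in the finitely generated free group $F$, so only finitely many such subgroups arise, and the induced rewriting maps between their Schreier generator sets are what furnish the finite substitution set $\Phi'$.
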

\begin{proof}
  For a constructive proof, we refer to~\cite{Har11b}.
\end{proof}
The constructive proof of Theorem~\ref{thm:ReidSchrApps} allows
us to apply the method for finitely $L$-presented groups to finite
index subgroups of a finitely $L$-presented group. As an application
of this method, we consider the successive quotients $G /G^{(i)}$
of the derived series. This series is defined recursively by $G^{(1)} =
G' = [G,G]$ and $G^{(i+1)} = [G^{(i)},G^{(i)}]$ for $i\in\N$. The
isomorphism type of the abelian quotient $G / G'$ can be computed with
the methods from~\cite{BEH08,Har08} provided that $G$ is given by a
finite $L$-presentation. Moreover, it is decidable whether or not $G'$
has finite index in $G$; see~\cite{Har08,BEH08}.\smallskip

Suppose that $G / G'$ is finite. Then the constructive proof
of Theorem~\ref{thm:ReidSchrApps} allows us to compute a finite
$L$-presentation for the finite index subgroup $G'\leq G$. Then we can
compute its abelianization and we can continue this process.  In general,
if $G / G^{(i+1)}$ is finite, we can therefore compute the quotients
$G^{(i+1)} / G^{(i+2)}$ recursively. An alternative approach to compute
the sections $G^{(i)} / G^{(i+1)}$ could generalize the methods for
finitely presented groups~\cite{Lo97}.\smallskip

For the Grigorchuk group $\Grig$, the sections $G^{(i)} / G^{(i+1)}$
of the derived series have been computed by Rozhkov~\cite{Roz93};
see also~\cite{Vie98}:
\def\0{\cite{Roz93}}
\begin{theorem}[Rozhkov~\0]
  The Grigorchuk group $\Grig$ satisfies $[\Grig:\Grig'] = 2^3$,
  \mbox{$[\Grig:\Grig''] = 2^7$}, and $[\Grig:\Grig^{(k)}] = 2^{2+2^{2k-2}}$
  for $k\geq 3$.
\end{theorem}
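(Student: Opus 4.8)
The plan is to treat the three regimes separately and to combine the algorithmic tools of the previous sections with the regular branch structure of $\Grig$. The two small cases $[\Grig:\Grig']=2^3$ and $[\Grig:\Grig'']=2^7$ are already within reach of the machinery assembled above: abelianizing Lys\"enok's finite $L$-presentation with the methods of~\cite{BEH08,Har08} gives $\Grig/\Grig'\cong(\Z_2)^3$, hence $[\Grig:\Grig']=2^3$. Since $\Grig'$ then has finite index, Theorem~\ref{thm:ReidSchrApps} yields a finite $L$-presentation of $\Grig'$, and abelianizing it gives $\Grig'/\Grig''\cong(\Z_2)^4$, so that $[\Grig:\Grig'']=2^3\cdot 2^4=2^7$. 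Iterating the same two steps once more produces a finite $L$-presentation of $\Grig''$ together with the value $[\Grig:\Grig^{(3)}]=2^{18}$, which is precisely the base case $2^{2+2^{2\cdot 3-2}}$ of the asserted formula.

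The general case $k\ge 3$ cannot be settled by finitely many such computations and requires the self-similar structure. I would use the section map $\psi\colon\Stab_\Grig(1)\to\Grig\times\Grig$ and the fact that $\Grig$ is regular branch over $K=\la(ab)^2\ra^{\Grig}$, so that $\psi(K)\supseteq K\times K$. Since $\Grig^{(k)}\le\Stab_\Grig(1)$ for every $k\ge 1$, one has $\psi(\Grig^{(k+1)})=[\psi(\Grig^{(k)}),\psi(\Grig^{(k)})]$, and the branch property squeezes this commutator between an explicit product of copies of lower derived subgroups and $\psi(\Stab_\Grig(1))$. Writing $[\Grig:\Grig^{(k)}]=2^{e_k}$ and controlling the finitely many boundary discrepancies at the root and the first two tree levels, the goal is to extract from this a linear recursion $e_{k+1}=4e_k-6$ valid for $k\ge 3$. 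The coefficient $4$ is the crucial point: it reflects that a single derived step is commensurable with passing to the four level-two sections, so that each derived section is, in index, the fourth power of its predecessor. Solving $e_{k+1}-2=4(e_k-2)$ with the computed base value $e_3=18$ gives $e_k-2=4^{k-1}=2^{2k-2}$, i.e. $[\Grig:\Grig^{(k)}]=2^{2+2^{2k-2}}$.

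The main obstacle is the structural recursion of the second paragraph, not the base cases. What must be proved exactly is a relation of the form $\psi_2(\Grig^{(k+1)})\supseteq(\Grig^{(k)})^{\times 4}$, where $\psi_2\colon\Stab_\Grig(2)\to\Grig^{\times 4}$ is the level-two section map, together with an identification of the image $\psi_2(\Grig^{(k+1)})$ inside $\Grig^{\times 4}$ up to an index that is independent of $k$; only then do the exponents satisfy a clean affine recursion. Establishing that these finite-index corrections stabilize --- equivalently, that the family of finite $L$-presentations produced by iterating Theorem~\ref{thm:ReidSchrApps} becomes branch-periodic from $\Grig^{(3)}$ onwards --- is where the real work lies. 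The algorithmic computations then serve a double role: they pin down the irregular transient values $e_1=3$, $e_2=7$, $e_3=18$ and confirm the stabilization constant $-6$, after which the induction runs uniformly for all $k\ge 3$.
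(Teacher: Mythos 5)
Your proposal is a plan rather than a proof, and its critical step is missing. Note first that the paper itself does not prove this theorem: it is an external citation of Rozhkov~\cite{Roz93}, and the paper merely reports machine computations (via Theorem~\ref{thm:ReidSchrApps} and the abelianization methods of~\cite{BEH08,Har08}) that confirm the first three steps of the derived series. Your first paragraph reproduces that verification in spirit, but with an error that betrays that the computation was not actually carried out: the correct result is $\Grig'/\Grig''\cong\Z_2\times\Z_2\times\Z_4$, not $(\Z_2)^4$. The order is $2^4$ either way, so the conclusion $[\Grig:\Grig'']=2^7$ survives, but an isomorphism type cannot be guessed from an index.

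The genuine gap lies in your second and third paragraphs, i.e.\ precisely at the infinitely many values $k\geq 3$ that constitute the theorem. You correctly observe that the asserted formula is equivalent to the affine recursion $e_{k+1}=4e_k-6$ with $e_3=18$, and that this should come from a level-two relation of the shape $\psi_2(\Grig^{(k+1)})\approx(\Grig^{(k)})^{\times 4}$; that is the structurally correct guess. But everything you actually establish --- the identity $\psi(\Grig^{(k+1)})=[\psi(\Grig^{(k)}),\psi(\Grig^{(k)})]$ (which is automatic, since $\psi$ is an injective homomorphism and $\Grig^{(k)}\leq\Stab_\Grig(1)$) and the squeeze between a product of derived subgroups and $\psi(\Stab_\Grig(1))$ --- cannot yield any recursion at all: the upper bound of the squeeze is a subgroup of \emph{fixed} finite index in $\Grig\times\Grig$, while the lower bound has index growing like $2^{4e_k}$, so the position of $\Grig^{(k+1)}$ is left undetermined inside a window whose width is unbounded in $k$. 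Identifying $\psi_2(\Grig^{(k+1)})$ exactly (or up to an index provably the same for all $k\geq 3$) is not a ``boundary discrepancy'' to be controlled; it is the entire content of Rozhkov's theorem, and you explicitly defer it (``where the real work lies''). Your fallback --- having the computer ``confirm the stabilization constant $-6$'' --- cannot close this gap either, since a finite computation checks finitely many $k$, whereas the stabilization is a statement about all $k$ and would need a structural induction (for instance, showing that $\Grig^{(k)}$ contains suitable level-$n$ copies of the branching subgroup $K$ and that this containment propagates under taking derived subgroups). Likewise, ``branch-periodicity'' of the $L$-presentations produced by iterating Theorem~\ref{thm:ReidSchrApps} is not a defined notion, let alone an established property; as the paper notes, already $\FG_3^{(3)}$ has $3^{16}-1$ Schreier generators, so iterating the Reidemeister--Schreier machinery is not a practical route to uniformity in $k$.
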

Our implementation of the Reidemeister-Schreier
Theorem~\ref{thm:ReidSchrApps} yields that
\[
  \Grig/\Grig' \cong (\Z_2)^3,\quad                          % well-known
  \Grig'/\Grig'' \cong \Z_2 \times\Z_2 \times \Z_4,\quad\textrm{and}\quad     % checked with GAP
  \Grig''/\Grig^{(3)} \cong (\Z_2)^2 \times (\Z_4)^3 \times \Z_8.% checked with GAP
\]
Since the abelianization $\FG_p / \FG_p' \cong \Z_p \times \Z_p$
of the Fabrykowski-Gupta group $\FG_p$ is finite~\cite{Gri00},
the derived subgroup $\FG_p'$ satisfies $[\FG_p:\FG_p'] = p^2$. A
finite $L$-presentation for $\FG_p'$ can be computed with the methods
in~\cite{Har11b}. We obtain that
\[
  \FG_3' / \FG_3'' \cong (\Z_3)^2, \qquad
  \FG_3'' / \FG_3^{(3)} \cong (\Z_3)^4,\quad\textrm{and}\quad
  \FG_3^{(3)} / \FG_3^{(4)} \cong (\Z_3)^{10}
\]
as well as $\FG_4' / \FG_4'' \cong (\Z_4)^2$,
\[
  \FG_4'' / \FG_4^{(3)} \cong \Z_2 \times (\Z_4)^2 \times \Z_8,\quad\textrm{ and }\quad
  \FG_4^{(3)} / \FG_4^{(4)} \cong (\Z_2)^3\times (\Z_4)^9\times (\Z_8)^3.
\]
%\[
%  \begin{array}{ccccc}
%  \toprule
%    & p = 3 & p = 5 & p = 7 & p = 11 \\
%  \midrule
%  \Gamma_p/\Gamma_p'   & \Z_3 \times \Z_3 & \Z_5\times \Z_5 & \Z_7 \times \Z_7 & \Z_{11} \times \Z_{11} \\[0.75ex]
%  \Gamma_p'/\Gamma_p'' & (\Z_3)^4 & (\Z_5)^4 & (\Z_7)^6 & (\Z_{11})^{10}                                \\[0.75ex]
%  \Gamma_p''/\Gamma_p''' & (\Z_3)^{10} &          &          &                                          \\[0.75ex]
%  \bottomrule
%  \end{array}
%\]
%\[
%  \begin{array}{cccc}
%  \toprule
%   d  & \Gamma_d/\Gamma_d' & \Gamma_d'/\Gamma_d'' & \Gamma_d''/\Gamma_d''' \\
%  \midrule
%  3   & (\Z_3)^2     &  (\Z_3)^4 & (\Z_3)^{10} \\
%  % ./LowNormalFG/Composite/DerQuot/Der4.log
%  4   & (\Z_4)^2     &  \Z_2 \times (\Z_4)^2 \times \Z_8 & (\Z_2)^3\times (\Z_4)^9\times (\Z_8)^3\\ 
%  5   & (\Z_5)^2     & (\Z_5)^4       &  \\
%  6   & (\Z_6)^2     & (\Z_6)^5       & \\
%  7   & (\Z_7)^2     & (\Z_7)^6       & \\ 
%  8   & (\Z_8)^2     & (\Z_8)^7       & \\
%  9   & (\Z_9)^2     & (\Z_9)^8       & \\
%  10  & (\Z_{10})^2  & (\Z_{10})^9    & \\
%  11   & (\Z_{11})^2 & (\Z_{11})^{10} & \\
%  \bottomrule
%  \end{array}
%\]
For $5 \leq d \leq 41$, our computations suggest the following 
\begin{proposition}
  For $d \geq 5$, $\FG_d$ satisfies $\FG_d
  / \FG_d' \cong (\Z_d)^2$ and $\FG_d' / \FG_d'' \cong
  (\Z_d)^{d-1}$.
\end{proposition}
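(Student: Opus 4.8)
The plan is to establish the two claimed isomorphisms by a combination of the finite $L$-presentation in Theorem~\ref{thm:FGLp} and the constructive Reidemeister-Schreier method of Theorem~\ref{thm:ReidSchrApps}, followed by the abelian-quotient computation from the nilpotent quotient algorithm of Section~\ref{sec:NQL}. The first isomorphism $\FG_d/\FG_d' \cong (\Z_d)^2$ I would obtain directly: abelianizing the $L$-presentation $\la\{\alpha,\rho\}\mid\emptyset\mid\{\varphi\}\mid\R\ra$ kills all commutator relators $\big[\sigma_i^{\sigma_{i-1}^k},\sigma_j^{\sigma_{j-1}^\ell}\big]$, and the remaining relators $\alpha^d$ and the conjugation-type relators $\sigma_i^{-\sigma_{i-1}^{k+1}}\sigma_i^{\sigma_{i-1}^k\sigma_{i-1}^{\sigma_{i-2}^\ell}}$ become trivial in $F/F'$ (every conjugate of a generator equals that generator in the abelianization). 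Hence $\FG_d/\FG_d'$ is the abelianization of $\la\alpha,\rho\mid\alpha^d\ra$, which is $\Z_d\times\Z_d$; this recovers the result of~\cite{Gri00}, and in particular $[\FG_d:\FG_d']=d^2$ is finite, so the Reidemeister-Schreier machinery applies.

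For the harder isomorphism $\FG_d'/\FG_d'' \cong (\Z_d)^{d-1}$, I would proceed in three steps. First, since $[\FG_d:\FG_d']=d^2$ is finite, the constructive proof of Theorem~\ref{thm:ReidSchrApps} yields an explicit finite $L$-presentation $\la\X'\mid\Q'\mid\Phi'\mid\R'\ra$ for the finite-index subgroup $\FG_d'$; the generating set $\X'$ comes from a Schreier transversal for $\FG_d'$ in $\FG_d$, so $|\X'|$ can be read off from the transversal. Second, I would apply the abelianization step of the nilpotent quotient algorithm (the $c=2$ case in Section~\ref{sec:NQL}) to this derived-subgroup $L$-presentation to compute $\FG_d'/(\FG_d')' = \FG_d'/\FG_d''$ as a finitely generated abelian group, using the spinning algorithm to reduce the infinitely many images $\bigcup_{\sigma\in(\Phi')^*}(\R')^\sigma$ to a finite generating set of the relation module modulo the commutator. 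Third, I would identify the resulting abelian invariants as $(\Z_d)^{d-1}$.

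The main obstacle is the third step: showing that the spinning algorithm stabilizes and that the abelian invariant matrix has exactly rank $d-1$ with all elementary divisors equal to $d$, \emph{uniformly in $d$}. The computational evidence in the statement covers only $5\leq d\leq 41$, so a genuine proof requires controlling the action of the substitution monoid $(\Phi')^*$ on the relation module for general $d$. Concretely, I expect one must show that the images of the iterated relators under $\Phi'$ span a sublattice of corank $d-1$ in $(\FG_d'/\FG_d')_{\mathrm{ab}}$ and that the quotient has exponent exactly $d$; this is the step where the self-similar structure inherited from $\varphi$ must be exploited to close off the induction on substitution length. If a closed-form description of the Schreier generators and their $\varphi$-images can be extracted, the rank count should follow from a finite linear-algebra computation over $\Z/d\Z$ that is independent of $d$; otherwise the statement remains conjectural, which is consistent with its phrasing as a proposition supported by computation rather than a theorem with a complete proof.
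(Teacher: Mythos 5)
Your derivation of the first isomorphism contains an error that exposes a recurring blind spot in the proposal: you ignore the iterated relators. The abelianization of $\la\alpha,\rho\mid\alpha^d\ra$ is $\Z_d\times\Z$, not $\Z_d\times\Z_d$ --- the relator $\alpha^d$ says nothing about $\rho$. The missing factor comes exactly from the $\varphi$-images you discarded: $\varphi(\alpha^d)=(\rho^{\alpha^{-1}})^d\equiv\rho^d\pmod{F'}$, and only after adjoining these substituted relators does the abelianization collapse to $(\Z_d)^2$. (The paper sidesteps this entirely by citing~\cite{Gri00}.)

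The genuine gap, however, is in the second isomorphism, and you name it yourself: the Reidemeister--Schreier-plus-abelianization plan is a terminating computation for each \emph{fixed} $d$, so it can only confirm the finitely many values of $d$ one actually runs, and you offer no mechanism to obtain uniformity in $d$; your fallback --- that the statement ``remains conjectural'' --- concedes that no proof has been given. Your closing remark also mischaracterizes the paper: the proposition is not merely computational evidence, since the paper proves it in full, and by a route entirely different from yours. It abandons presentations and argues inside the self-similar action: $\FG_d=\Stab_{\FG_d}(1)\rtimes\la a\ra$ with $\Stab_{\FG_d}(1)=\la r,r^a,\ldots,r^{a^{d-1}}\ra$, and $\FG_d'\leq\Stab_{\FG_d}(1)$ has index $d$ there. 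Decomposing the commutators of the generators $g_i=r^{a^i}$ along the first level of the tree yields $\FG_d/\Stab_{\FG_d}(2)\cong\Z_d\wr\Z_d$ and, by an order count on abelian quotients, $\Stab_{\FG_d}(2)=\Stab_{\FG_d}(1)'$. Consequently $\FG_d''\leq\Stab_{\FG_d}(1)'=\Stab_{\FG_d}(2)$, while the reverse inclusion $\Stab_{\FG_d}(2)\leq\FG_d''$ holds for $d\geq 5$ by the branch-group results of~\cite{Gri00,BEH08} --- this is precisely where the hypothesis $d\geq 5$ enters, and it is the key idea your proposal lacks. With it, $\FG_d'/\FG_d''=\FG_d'/\Stab_{\FG_d}(2)$, which sits inside $\Stab_{\FG_d}(1)/\Stab_{\FG_d}(2)\cong(\Z_d)^d$ as the kernel of the coordinate-sum map onto $\Stab_{\FG_d}(1)/\FG_d'\cong\Z_d$, hence is isomorphic to $(\Z_d)^{d-1}$ for every $d\geq 5$ simultaneously --- no spinning algorithm, no control of the substitution monoid, and no case-by-case computation required.
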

\begin{proof}
  It was already shown in~\cite{Gri00} that $\FG_d / \FG_d' \cong
  \Z_d \times \Z_d$ holds. For the second statement, we combine the methods
  from~\cite{FAZR11} and~\cite{Gri00}: For primes $p$, the structure
  of the congruence subgroups $\FG_p / \Stab_{\FG_p}(n)$, $n\in\N$,
  were studied in~\cite{FAZR11}. Moreover, it was shown in~\cite{Gri00} that,
  for $d \geq 5$, the index $[\FG_d':\FG_d'']$ is finite.\smallskip

  Let $d \geq 5$ be given. Denote by $\Stab_{\FG_d}(1)$ the first level
  stabilizer in $\FG_d$.  Then $\FG_d = \Stab_{\FG_d}(1) \rtimes \la a
  \ra$ and $\Stab_{\FG_d}(1) = \la r,r^a,\ldots,r^{a^{d-1}} \ra$ hold. Since
  $\FG_d' = \la [a,r]\ra^{\FG_d} = \la r^{-a}\,r\ra^{\FG_d}$,
  we have that $\FG_d' \leq \Stab_{\FG_d}(1)$ and, as $\FG_d / \FG_d'
  \cong \Z_d \times \Z_d$ holds, we have that $[\Stab_{\FG_d}(1):\FG_d']
  = d$. More precisely, we have $\Stab_{\FG_d}(1) = \FG_d' \rtimes \la
  r \ra$.\smallskip

  For each $0\leq i < d$, we write $g_i = r^{a^i}$. In the following, indices
  are read modulo $d$. For $0\leq\ell < d$, $g_i^\ell$ decomposes as
  $(1,\ldots,1,r^\ell,a^\ell,1,\ldots,1)$ where $a^\ell$ is at position
  $i$. If \mbox{$|\ell-k|>1$}, the commutator $[g_i^\ell,g_j^k]$ is
  trivial; otherwise, the commutator $[g_i^\ell,g_{i+1}^k]$ decomposes
  as $(1,\ldots,1,[a^\ell,r^k],1,\ldots,1)$ with $[a^\ell,r^k]$ at
  position $i$. Since $[a^\ell,r^k] \in \Stab_{\FG_d}(1)$, we have that
  $[g_i^\ell,g_j^k] \in \Stab_{\FG_d}(2)$. Thus, $\Stab_{\FG_d}(1)
  / \Stab_{\FG_d}(2)$ is abelian and it is generated by the images
  of the elements $g_0,\ldots,g_{d-1}$. Because $[a^\ell,r^k] =
  a^{-\ell}\,r^{-k}\,a^\ell\,r^k = g_\ell^{-k} g_0^k$, we have that
  $[g_i^\ell,g_j^k] \in \Stab_{\FG_d}(3)$ if and only if $\ell\,k \equiv
  0 \pmod d$. Therefore $\Stab_{\FG_d}(1) / \Stab_{\FG_d}(2) \cong \Z_d
  \times \cdots \times \Z_d$ and $\FG_d / \Stab_{\FG_d}(2) \cong \Z_d \wr
  \Z_d$. Since $\Stab_{\FG_d}(1) / \Stab_{\FG_d}(2)$ is abelian,
  we have that $\Stab_{\FG_d}(1)' \leq \Stab_{\FG_d}(2)$. Because each
  generator of $\Stab_{\FG_d}(1)$ has order $d$, the largest abelian
  quotient $\Stab_{\FG_d}(1) / \Stab_{\FG_d}(1)'$ has order at most
  $d^d$. It follows that $\Stab_{\FG_d}(2) = \Stab_{\FG_d}(1)'$. Moreover,
  we have $\Stab_{\FG_d}(2) = \Stab_{\FG_d}(1)' \leq \FG_d'$ and, since
  $\FG_d' \leq \Stab_{\FG_d}(1)$ holds, it follows that $\FG_d'' \leq
  \Stab_{\FG_d}(1)' = \Stab_{\FG_d}(2)$. The proofs in~\cite{Gri00,BEH08}
  yield that $\Stab_{\FG_d}(2) \leq \FG_d''$ if $d \geq 5$.  Therefore
  $d^{d-1} = |\FG_d' / \Stab_{\FG_d}(2)| = |\FG_d' / \FG_d''|$ and $\FG_d'
  / \FG_d'' \cong \Z_d \times \cdots \times \Z_d$.
\end{proof}
The constructive proof of Theorem~\ref{thm:ReidSchrApps} in~\cite{Har11b}
yields a finite $L$-presentation over the Schreier generators of
the subgroup. By the Nielsen-Schreier theorem (as, for instance,
in~\cite[6.1.1]{Rob96}), a subgroup $H$ with index $m = [G:H]$ in an
$n$-generated finitely $L$-presented group $G$ has $nm+1-m$ Schreier
generators. The Fabrykowski-Gupta groups are $2$-generated and therefore,
the subgroup $\FG_3^{(3)}$ satisfies \mbox{$[\FG_3:\FG_3^{(3)}] =
3^{16}$}.  Thus $\FG_3^{(3)}$ has $3^{16}-1$ Schreier generators as a
subgroup of the $2$-generated group $\FG_3$. Therefore, computing the
sections $\FG_3^{(i)} / \FG_3^{(i+1)}$, $i\geq 4$, with the above method
is hard in practice.

\subsection*{Acknowledgments}
I am grateful to Laurent Bartholdi for valuable comments and
suggestions.

\def\cprime{$'$}

\noindent Ren\'e Hartung,
{\scshape Mathematisches Institut},
{\scshape Georg-August Universit\"at zu G\"ottingen},
{\scshape Bunsenstra\ss e 3--5},
{\scshape 37073 G\"ottingen},
{\scshape Germany}\\[1ex]
{\it Email:} \qquad \verb|rhartung@uni-math.gwdg.de|\\[2ex]

\end{document}